\numberwithin{equation}{section}
\newtheorem{theo}{Theorem}
\newtheorem{lemma}[theo]{Lemma}
\newtheorem{prop}[theo]{Proposition}
\newtheorem{cor}[theo]{Corollary}
\theoremstyle{definition}
\numberwithin{theo}{section}
\newtheorem{rem}[theo]{Remark}
\DeclareMathOperator{\Ker}{Ker}
\title[Diffusion on an interval under linear moment conditions]{Diffusion processes on an interval\\ under linear moment conditions}
\subjclass[2010]{47D06, 47H20, 34B10, 76S05}
\keywords{Nonlocal conditions for PDEs,  Porous Medium Equation, Heat equation, Subdifferentials}
\author{Delio Mugnolo}
\address{Delio Mugnolo, Institut f\"ur Analysis, Universit\"at Ulm, Helmholtzstra{\ss}e 18, 89081 Ulm, Germany}
\email{delio.mugnolo@uni-ulm.de}
\author{Serge Nicaise}
\address{Serge Nicaise, Universit\'e de Valenciennes et du Hainaut Cambr\'esis, LAMAV,  FR CNRS 2956, ISTV, F-59313 - Valenciennes Cedex 9, France}
\email{Serge.Nicaise@univ-valenciennes.fr}
\thanks{Version of \today\\
This article was written partially during a visit  of the second author at the LAMAV in Valenciennes under the financial support of the Land Baden--W\"urttemberg in the framework of the \emph{Juniorprofessorenprogramm} -- research project on ``Symmetry methods in quantum graphs''.}
\begin{document}

\begin{abstract}
We discuss a class of  {diffusion-type} partial differential equations on a bounded interval and discuss the possibility of replacing the boundary conditions by  {certain linear} conditions on the moments of order 0 (the total mass) and of another arbitrarily chosen order $n$. Each choice of $n$  induces the addition of a certain potential in the equation, the case of zero potential arising exactly in the special case of $n=1$  {corresponding to a condition on the barycenter}. In the linear case we exploit smoothing properties and perturbation theory of analytic semigroups to obtain well-posedness for the classical  heat equation (with said conditions on the moments). Long time behavior is studied for both the linear heat equation with potential and  {certain nonlinear equations of porous medium or fast diffusion type. In particular, we prove polynomial decay in the porous medium range and exponential decay in the fast diffusion range, respectively.}
\end{abstract}

\maketitle

\section{Introduction}

In~\cite{Vaz83} J.L.~V\'azquez made the simple observation that possibly diffusion-type equations of the form
\begin{equation}\label{pme1}
\frac{\partial u}{\partial t}(t,x)=\Delta(|u|^{p-2}u)(t,x),
\end{equation}
which are well-known to be associated with a well-posed Cauchy problem in $H^{-1}(\mathbb R^d)$ if $d=1$ (and even for $d>1$), enjoy conservation of mass and barycenter.  {Here and in the following, $p$ is some constant strictly larger than $1$: This result applies therefore to both the porous medium equation (PME) and the fast diffusion equation (FDE) along with the linear heat equation, corresponding to the cases of $p\in (2,\infty)$, $p\in (1,2)$ and $p=2$, respectively.}

V\'azquez' assertion is easily explained: For a density distribution function $f:\mathbb R\to \mathbb R$ denote by $\mu_n(f)$ the \textit{$n$-th moment}  (say, about 1), i. e., 
\[
\mu_n(f):=\int_\mathbb R (1-x)^n f(x)\; dx\qquad \hbox{for }f\in L^1(\mathbb R).
\]
Then in particular $\mu_0(f)$ and $\mu_1(f)$ represent the total mass and the barycenter of the distribution described by $f$, respectively. Now, differentiating with respect to time  {the moments of order 0 and 1 of a solution $u$ of~\eqref{pme1} with initial data $u_0$} and integrating by parts with respect to space a simple localization argument shows that
\[
\mu_0(u(t))=\mu_0(u_0)\qquad \hbox{and}\qquad \mu_1(u(t))=\mu_1(u_0)\quad\forall t\ge 0.
\]

Choosing boundary conditions judiciously, one can see that conservation of mass and/or barycenter may also hold in the case of a PME or a FDE on a bounded interval.  {Observe that conservation of mass and barycenter can also be defined for solutions so irregular that boundary conditions do not make sense -- this is particularly relevant in the case of the PME and the FDE, which are typically solved in spaces of distributions (\cite[Chapt.~10]{Vaz07}). May then the condition that mass and barycenter be conserved replace boundary conditions altogether?}

 {Imposing conditions on the moments may appear bizarre. However, since certain moment conditions boil down to boundary conditions if solutions are regular enough (\cite[Cor.~4.10]{MugNic11}), at a second glance it looks reasonable to investigate well-posedness under such conditions in the case of a bounded domain. Beginning with~\cite{Can63}, many authors have studied linear partial differential equations equipped with conditions on the moments complementing those on the boundary values.}
In~\cite{MugNic11} the present authors have gone on to observe that, in fact, in the case of the linear heat equation one can drop the boundary conditions and obtain well-posedness under a wide class of linear conditions on the moments of order 0 and 1 -- and in particular, whenever both of them are assumed to vanish constantly, i.e.,
\begin{equation}\label{lincond1}
\mu_n(u(t))=0\quad \forall t\ge 0,\; n=0,1.
\end{equation}
This special case had already been discussed by A.\ Bouziani and his coauthors starting with~\cite{BouBen96}, see~\cite{MugNic11} for more detailed references.  {A somehow comparable approach has been followed in~\cite[\S~9.6]{Vaz07}, where an analysis based on mere \emph{finiteness} of mass is performed.}

There exist counterexamples showing that, in general, moments of higher order are not conserved under the  evolution of~\eqref{pme1} on $\mathbb R$, cf.~\cite[\S~9.6.4]{Vaz07}. Hence it is not natural to expect well-posedness upon imposing a condition analogous to~\eqref{lincond1} for \emph{any} two moments. Our aim in this article is to show that, however, suitable  {conditions on $\mu_0$ and a further moment $\mu_n$} suffice to obtain well-posedness of certain modified evolution equations -- which can be looked at as PMEs or FDEs with a potential  {depending on $n$}.

It turns out that the analysis of  {diffusion equations} on an interval under conditions on $\mu_0$ and $\mu_n$ for general $n$ can be performed closely following some techniques developed in~\cite{MugNic11}. The extension of such techniques to the more general setting of the present article is discussed thoroughly in Section~\ref{sec:bouz}. As it is, the well-posedness results presented in~\cite{MugNic11} are just a special case of those that we obtain in Section~\ref{linearcase}.

However, serious problems seem to arise in the truly nonlinear case, i.e., whenever we discuss~\eqref{pme1} for $p\neq 2$ -- this is the topic of Section~\ref{nonlinearcase}.  {Both the PME and the FDE with Dirichlet boundary conditions are well-known to be the flow of the gradient associated with a suitable energy functional (also known as the functional's \emph{subdifferential} in the language of nonlinear semigroup theory, see e.g.~\cite[Example IV.6.B]{Sho97} and~\cite[Chapt.~10]{Vaz07}, and more generally~\cite{Bre73} for the abstract theory) with respect to an $H^{-1}$-inner product.}
(Observe that a different, more involved but also mightier approach based on flows on Riemannian manifolds has been introduced by F.\ Otto in a celebrated article~\cite{Ott01}). In our setting the gradient flow structure is still present, but unlike in the linear case we are not able to show that for initial data smooth enough this evolution equation is just the PME or the FDE with a certain potential. Nevertheless, we obtain well-posedness of a certain  {$n$-dependent} nonlinear evolution equation that strongly resembles the PME or the FDE, and we are able to show that its long-time behavior depends on $p$.  {In particular, we show that for all $n$ the decay of the $H^{-1}$-norm of the solutions is polynomial if $p\in (2,\infty)$ and exponential if $p\in (1,2]$. Our analysis is made different from the classical case by the fact that for this new evolution equation integration by parts is not easily applied -- this in turn prevents us from applying the classical method based on the weak formulation of the PME or the FDE, which are the backbone of many proofs in~\cite{Vaz07}.}

\section{The functional analytical setting}\label{sec:bouz}

If we consider $(0,1)$ as the torus $T$, then the test function set ${\mathcal D}(T)$ is in fact the set of smooth functions in $[0,1]$ such that the derivatives at all orders coincide at 0 and 1. In the same manner we will use the Sobolev space $H^1(T)$, by which we denote the subspace of those $u\in H^{1}(0,1)$ such that $u(0)=u(1)$ (i.e., of those $H^{1}$-functions supported on the torus). We use $L^2(0,1)$ as pivot space and denote by $H^{-1}(T)$ (resp., ${\mathcal D}'(T)$) the dual of $H^1(T)$ (resp., $\mathcal D(T)$).
 For $p\in (1,\infty)$, $p\not=2$, we define the  Sobolev spaces $W^{1,p}(T)$ and $W^{-1,p}(T)$ likewise. {In this paper we refrain from considering the case of~\eqref{pme1} for $p\in (0,1]$, which is known to require a quite different approach, cf.~\cite[\S~2.2.1]{Vaz06}.}

\begin{rem}\label{idmdef}
It was already observed in~\cite[Lemma~2.1]{MugNic11} that each element of ${\mathcal D}'(T)$ can be identified with an element of ${\mathcal D}'(0,1)$, but the identified vector is \emph{not} unique. We denote this non-injective identification operator by ${\rm Id}$, and by ${\rm Id}_m$ its restriction to $\{u\in H^{-1}(T):\mu_0(u)=0\}$ which, by~\cite[Lemma~2.4]{MugNic11}, is an isomorphism.
\end{rem}

For all $n\in \mathbb N_0$ we denote by $\mu_n$ the linear functional defined by
$$\mu_n(f):=\int_0^1(1-x)^n f(x)\,dx,$$
which is bounded on $L^1(0,1)$ (and even on $H^{-1}(T)$ for $n=0$).

For $n\in \mathbb N_0$ and $\varphi\in {\mathcal D}(T)$ we denote by $J_n \varphi$ the function defined by
\[
J_n \varphi(x):=\int_x^1 \varphi(y)\; dy-\mu_0(\varphi)(1-x)^n,\qquad x\in (0,1),
\]
and set for all $u\in H^{-1}(0,1)$
\begin{equation}\label{0}
\langle P_n u, \varphi\rangle:=\langle u, J_n \varphi\rangle\quad\forall \varphi\in {\mathcal D}(T).
\end{equation}
which is meaningful since $ J_n \varphi\in H^1_0(0,1)$.

We obtain the following analogue of~\cite[Lemma.~2.2]{MugNic11}, where the attention was devoted to $P_1$ only.

\begin{lemma}\label{lemma1}
Let $p\in [1,\infty)$ and $n\in \mathbb N$. Then the following assertions hold.
\begin{enumerate}
\item For all $u\in L^p(0,1)$, $P_nu$ can be written as
\begin{equation}\label{2}
P_nu(x)={\mathcal I} u(x)-\mu_n (u),\qquad \forall x\in (0,1),
\end{equation}
where
$$
{\mathcal I} u(x):=\int_0^x u(y)dy,\qquad \forall x\in (0,1).
$$
In particular, $(P_n u)'=u$ whenever $u\in L^p(0,1)$.
\item  Moreover, $P_n$ is a bounded linear operator from  $W^{-1,p}(T)$ to $L^p(0,1)$ and from $L^p(0,1)$ to $W^{1,p}(0,1)$ {as well as from $\{ f\in L^p(0,1):\mu_0(f)=0\}$ to $W^{1,p}(T)$}.
\item Finally, $(P_n u)'=u$  in $H^{-1}(0,1)$  for all $u\in H^{-1}(0,1)$.
\end{enumerate}
\end{lemma}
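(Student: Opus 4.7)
The plan is to establish (1) by a direct Fubini calculation, deduce two of the three boundedness statements in (2) from it, handle the remaining bound from $W^{-1,p}(T)$ to $L^p(0,1)$ by viewing $P_n$ as the adjoint of $J_n$, and finally derive (3) from a short distributional integration by parts.

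For (1), I fix $u\in L^p(0,1)$ and $\varphi\in {\mathcal D}(T)$ and compute $\langle P_nu,\varphi\rangle = \langle u,J_n\varphi\rangle$ directly from~\eqref{0}. Splitting according to the definition of $J_n$, a Fubini exchange turns $\int_0^1 u(x)\int_x^1\varphi(y)\,dy\,dx$ into $\int_0^1 {\mathcal I}u(y)\varphi(y)\,dy$, while the second piece contributes $-\mu_n(u)\int_0^1\varphi(y)\,dy$. This gives the representation $P_nu = {\mathcal I}u - \mu_n(u)$, whose classical a.e.\ derivative is $u$.

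For (2), the bounded inclusion $L^p\to W^{1,p}(0,1)$ is immediate from this representation, since ${\mathcal I}u\in W^{1,p}(0,1)$ with derivative $u$ and $\mu_n(u)$ is a scalar. The $W^{1,p}(T)$ claim on $\{f:\mu_0(f)=0\}$ reduces to the endpoint condition $P_nu(0) = P_nu(1)$, which by the explicit formula reads $-\mu_n(u) = \mu_0(u)-\mu_n(u)$, i.e.\ $\mu_0(u)=0$. For $W^{-1,p}(T)\to L^p(0,1)$, I estimate $J_n\varphi$ in $W^{1,p'}$: by H\"older $\|\int_x^1\varphi\|_{L^{p'}}\le\|\varphi\|_{L^{p'}}$ and $|\mu_0(\varphi)|\le\|\varphi\|_{L^{p'}}$, while $(J_n\varphi)'(x) = -\varphi(x) + n\mu_0(\varphi)(1-x)^{n-1}$ yields $\|J_n\varphi\|_{W^{1,p'}(0,1)}\le C\|\varphi\|_{L^{p'}}$. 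Since $J_n\varphi(0)=J_n\varphi(1)=0$, the zero extension lies in $W^{1,p'}(T)$ with comparable norm; hence for $\varphi\in{\mathcal D}(T)$,
\[
|\langle P_nu,\varphi\rangle|\le \|u\|_{W^{-1,p}(T)}\|J_n\varphi\|_{W^{1,p'}(T)}\le C\|u\|_{W^{-1,p}(T)}\|\varphi\|_{L^{p'}(0,1)},
\]
and density of ${\mathcal D}(T)$ in $L^{p'}(0,1)$ promotes $P_nu$ to an element of $L^p(0,1)$ with the expected norm bound.

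For (3), I test $P_nu$ against $\psi\in{\mathcal D}(0,1)\subset{\mathcal D}(T)$. The key computation is that $\mu_0(\psi') = \psi(1)-\psi(0) = 0$ and $\int_x^1\psi'(y)\,dy = -\psi(x)$, so $J_n\psi' = -\psi$ regardless of $n$. Consequently, for $u\in H^{-1}(0,1)$,
\[
\langle (P_nu)',\psi\rangle = -\langle P_nu,\psi'\rangle = -\langle u,J_n\psi'\rangle = \langle u,\psi\rangle,
\]
so $(P_nu)' = u$ in ${\mathcal D}'(0,1)$ and, since $u$ already lies in $H^{-1}(0,1)$, the identity holds in $H^{-1}(0,1)$ as well. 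The main subtlety is bookkeeping the duality pairings against the non-injective identification ${\rm Id}:{\mathcal D}'(T)\to{\mathcal D}'(0,1)$ of Remark~\ref{idmdef} when writing $(P_nu)'\in H^{-1}(0,1)$; the combinatorics of the $(1-x)^n$ correction are benign, since it is exactly the term that forces $J_n\varphi\in H^1_0(0,1)$ and vanishes whenever $\varphi$ is replaced by $\psi'$ with $\psi\in{\mathcal D}(0,1)$, which is why $n$ disappears from the derivative identities in (1) and (3).
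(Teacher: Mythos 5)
Your proof is correct and follows essentially the same route as the paper's: the Fubini computation identifying $P_nu$ with ${\mathcal I}u-\mu_n(u)$, the bound $\|J_n\varphi\|_{W^{1,p'}_0(0,1)}\lesssim\|\varphi\|_{L^{p'}}$ combined with duality for the $W^{-1,p}(T)\to L^p$ statement, and the identity $J_n(\psi')=-\psi$ for part (3). The only (harmless) variations are that you extend by density of ${\mathcal D}(T)$ in $L^{p'}(0,1)$ rather than in $W^{-1,p}(T)$, and that you spell out the endpoint check $P_nu(0)=P_nu(1)\iff\mu_0(u)=0$ for the $W^{1,p}(T)$ claim, which the paper leaves implicit.
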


\begin{proof}
Let $u\in L^p(0,1)$ be fixed. Denote for a moment $\tilde P_nu$ the right-hand side of \eqref{2} (which is by construction an element of $W^{1,p}(0,1)$), i.e.,
$$
\tilde P_nu(x):={\mathcal I} u(x)-\mu_n (u),\qquad  x\in (0,1).
$$
By integrations by parts for all $\varphi\in {\mathcal D}(T)$ one has 
\begin{eqnarray*}
\int_0^1 \tilde P_nu(x) \varphi(x)\; dx &=& \int_0^1 ({\mathcal I}u(x)-\mu_n (u))\varphi(x)\; dx\\
&=&\int_0^1 \left(\int_0^x u(y)\; dy\right) \varphi(x)\; dx - \mu_n (u) \mu_0(\varphi).
\end{eqnarray*}
Accordingly, for $0\le y\le x\le 1$ by Fubini's Theorem we find
\begin{eqnarray*}
\int_0^1 \tilde P_nu(x) \varphi(x)\; dx
&=&\int_0^1 \left(\int_y^1 \varphi(x) \; dx\right) u(y)\; dy - \mu_n (u) \mu_0(\varphi)\\
&=&\int_0^1 \left(\int_y^1 \varphi(x) \; dx - \mu_0 (\varphi) (1-y)^n \right) u(y)\; dy 
\\
&=&\int_0^1 u(x)J_n\varphi(x)\; dx.
\end{eqnarray*}
This proves that $P_nu=\tilde P_nu$, hence (1),  and furthermore $P_n u\in  W^{1,p}(0,1)$.

In a second step we first easily check that for $\varphi\in {\mathcal D}(T)$,
$J_n\varphi$ is in $W^{1,q}(T)$ with $\frac1p+\frac1q=1$ (it is even in $W^{1,q}_0(0,1)$) and that
$$
\|J_n\varphi\|_{W^{1,q}_0(0,1)}\lesssim \|\varphi\|_{L^q(0,1)}.
$$
According to~\eqref{0} we then get
$$
| \langle P_nu, \varphi\rangle|\leq \| u\|_{W^{-1,p}(T)} \|J_n\varphi\|_{W^{1,q}_0(0,1)}\lesssim \| u\|_{W^{-1,p}(T)} \|\varphi\|_{L^q(0,1)}.
$$

Therefore
$$\|P_n u\|_{L^p(0,1)}\lesssim \|u\|_{W^{-1,p}(T)},\qquad \forall u\in {\mathcal D}(T),$$
which proves that $P_n$ is a bounded operator from $W^{-1,p}(T)$ to $L^p(0,1)$
due to the density of ${\mathcal D}(T)$ into $W^{-1,p}(T)$.
As the boundedness of $P_n$ from $L^p(0,1)$ to $W^{1,p}(0,1)$ has been observed above, point (2) is proved.

Finally, the assertion (3)  follows the definition of $P_n$ and the fact that $J_n(\varphi')=-\varphi$
for all $\varphi\in H^1_0(0,1)$.
\end{proof}

In particular, 
\[
(P_n 1)(x)=x-\int_0^1 (1-x)^n \; dx=x-\frac{1}{n+1}.
\]

Observe that, unlike for $n=1$, for $n\ge 2$ one has in general 
\begin{equation}\label{nogut}
P_n\not\in {\mathcal L}\Big(\{ f\in H^{-1}(T):\mu_0(f)=0\},\{ f\in L^p(0,1):\mu_0(f)=0\}\Big),
\end{equation}
since $\langle P_n u,1\rangle\not=0$.

{
\begin{rem}\label{rem-mugnic11}
Recall that by~\cite[Lemma~2.3]{MugNic11} 
\[
(P_1 u|P_1 v)+\mu_0(u)\overline{\mu_0(v)},\quad u,v\in H^{-1}(T),
\]
defines an equivalent inner product on $H^{-1}(T)$, and in particular
\begin{equation}
\label{equivnorm}
\| {\rm Id}_m^{-1}u\|_{H^{-1}(T)}\simeq \|P_1 {\rm Id}_m^{-1}u\|_{L^2(0,1)},\quad \forall u\in H^{-1}(T).
\end{equation}
In fact, we can say more. Taking into account Remark~\ref{munmun1}.(1) and reasoning just like in the proof of~\cite[Lemma~2.3]{MugNic11}  one can easily see that 
\begin{equation*}\label{1}
 \|u\|_{H^{-1}(T)} \lesssim \|P_n u\|_{L^2}+|\mu_0(u)|, \quad \forall u\in H^{-1}(T).
\end{equation*}
Accordingly, we can even say that 
 \[
(P_n u|P_n v)+\mu_0(u)\overline{\mu_0(v)},\quad  u,v\in H^{-1}(T),
\]
defines an equivalent inner product on $H^{-1}(T)$ for all $n\in \mathbb N$.
\end{rem}}

{
\begin{rem}\label{munmun1}
Let $n\in \mathbb N$. 
\begin{enumerate}
\item Observe that for all $n\in \mathbb N$
$$\mu_n(f)=n\int_0^1 (1-x)^{n-1} \int_0^x f(z)dz dx=n\mu_{n-1}({\mathcal I}f)\quad \forall f\in L^1(0,1).$$
\item {Let $f\in L^1(0,1)$. Then by Lemma~\ref{lemma1}.(1) and integrating by parts
\begin{eqnarray*}
\mu_{n-1}(P_n f)&=&\int_0^1 (1-x)^{n-1} {\mathcal I}f(x)\; dx-\mu_n(f)\int_0^1 (1-x)^{n-1}dx\\
&=&\left[ \frac{(1-x)^n}{n} \int_0^x f(x)dx\right]_{x=0}^{x=1}-\int_0^1 \frac{(1-x)^n}{n}f(x)\; dx+\frac{1}{n}\mu_n(f)\\
&=&-\frac{1}{n}\mu_n(f)+\frac{1}{n}\mu_n(f)=0.
\end{eqnarray*}}
\item Observe also that
\begin{equation*}\label{sergeajout3}
 P_n \delta_1=0 \qquad \hbox{in }{\mathcal D}'(T),
\end{equation*}
since for all $\phi \in {\mathcal D}(T)$ $\langle P_n\delta_1, \phi\rangle=\langle \delta_1,J_n\phi\rangle = J_n\phi(1)=0$.
\item It is also important for the following that
\begin{equation}\label{Pn02}
P_n h(0)=-\mu_n(h)\qquad \hbox{and}\qquad P_n h(1)=\mu_0(h)-\mu_n(h)\quad \forall h\in L^1(0,1)
\end{equation}
as well as
\begin{equation}\label{Pn01}
\mu_0(P_n(h))=\int_0^1 P_n h(x)\; dx=\mu_1(h)-\mu_n(h)\quad \forall h\in L^1(0,1):
\end{equation}
both identities are direct consequences of~\eqref{2}.
\item It is a straightforward observation that~\cite[Rem.~2.6]{MugNic11} can be generalized as follows: For all $p\in [1,\infty)$, all $f\in W^{2,p}(0,1)$ and all $\psi\in L^p(0,1)$, we have
\begin{equation}\label{ajoutserge10}
{\rm Id}_m^{-1}(f''+\psi)=f''+\psi +(\mu_0(f''+\psi))\delta_1 \quad\hbox{ in } W^{-1,p}(T),
\end{equation}
and in particular by (3)
$$
P_n({\rm Id}_m^{-1}(f''+\psi))=P_n(f''+\psi) \quad\hbox{ in } L^p(0,1).
$$
\item Finally, observe that by Lemma~\ref{lemma1} and~\eqref{Pn02} $P_n$ is bounded {from $\{f\in L^p(0,1):\mu_0(f)=0\}$ to $W^{1,p}(T)$} and from $\{f\in L^p(0,1):\mu_0(f)=\mu_n(f)=0\}$ to $W^{1,p}_0(0,1)$.
\end{enumerate}
\end{rem}

The crucial point for our investigation is that an integration-by-parts-type formula holds. Recall that we are denoting by ${\rm Id}_m$ the isomorphism between $\{u\in H^{-1}(T):\mu_0(u)=0\}$ and $H^{-1}(0,1)$ and let $u\in H^1(0,1)$. Then for all $u\in H^1(0,1)$ and all $h\in H^{1}(T)$, by Remark~\ref{rem-mugnic11} one has
\begin{eqnarray*}
\left({\rm Id}_m^{-1}(u'')|h\right)_{H^{-1}(T)}&=& (P_n {\rm Id}_m^{-1}(u'')|P_n h)_{L^2}\\
&=&(u'-a|P_n h)_{L^2},
\end{eqnarray*}
where
\begin{equation}\label{adef}
a:=\langle u'-P_n (u''),1\rangle.
\end{equation}
{Hence, \eqref{Pn01} and a standard integration by parts yield
\begin{eqnarray}
\nonumber
\left({\rm Id}_m^{-1}(u'')|h\right)_{H^{-1}(T)}&=&(u'|P_n h)_{L^2}-a\overline{\left(\mu_1(h)-\mu_n(h)\right)}\\
&=&-(u| h)_{L^2}+[u P_n \overline{h}]_0^1 -a\overline{\left(\mu_1(h)-\mu_n(h)\right)}\label{serge30/08}
\end{eqnarray}
since by Lemma~\ref{lemma1}.(3) $(P_n h)'=h$.}

\begin{rem}
Note that
\[
a= u(1)-u(0)-\langle u'', (1-{\rm id})-(1-{\rm id})^n\rangle
\]
because
\[
\langle P_n (u''),1\rangle = \langle u'',J_n 1\rangle
\]
-- this is the duality between $H^{-1}(0,1)$ and $H^1_0(0,1)$, with the function $J_n 1\in H^1_0(0,1)$ given by
\[
(J_n 1)(x)=(1-x) - (1-x)^n\quad \forall x\in (0,1).
\]
For $n=1$, $J_n 1\equiv 0$ and  we get $\langle P_n (u''),1\rangle =0$,
but in any case, this is does not really matter since $a$ is multiplied by $0$ in \eqref{serge30/08}.
\end{rem}

 Indeed, we can still improve the formula in~\eqref{serge30/08} for $n\ge 2$.

\begin{theo}\label{ibp}
Let $u\in H^1(0,1)$ and $h\in L^2(0,1)$. Then
\[
\left({\rm Id}_m^{-1}(u'')|h\right)_{H^{-1}(T)}= -(u|h)_{L^2}+\left(\begin{pmatrix}
u(1)\\ n u(0)- n (n-1)\mu_{n-2}(u)\\
(1-n) u(0)-u(1)+n (n-1)\mu_{n-2}(u)\\  
\end{pmatrix}\Big|\begin{pmatrix}
\mu_0(h)\\
 \mu_1(h)\\ \mu_n (h)
\end{pmatrix}\right)_{\mathbb C^3}.
\]
\end{theo}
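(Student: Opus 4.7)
The plan is to use~\eqref{serge30/08} as the starting point, work out the constant $a$ in closed form with the formula provided by the preceding Remark, insert the explicit boundary values~\eqref{Pn02} of $P_n\overline h$, and regroup the terms according to $\overline{\mu_0(h)}$, $\overline{\mu_1(h)}$ and $\overline{\mu_n(h)}$. Because~\eqref{serge30/08} was established only for $h\in H^1(T)$, the final step is to extend the resulting identity to $h\in L^2(0,1)$ by density.

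According to the Remark,
\[
a=u(1)-u(0)-\langle u'',(1-\mathrm{id})-(1-\mathrm{id})^n\rangle,
\]
and since $(1-x)-(1-x)^n\in H^1_0(0,1)$, the duality pairing equals
\[
-\int_0^1 u'(x)\bigl(-1+n(1-x)^{n-1}\bigr)\,dx=u(1)-u(0)-n\int_0^1 u'(x)(1-x)^{n-1}\,dx.
\]
A further integration by parts using $(1-x)^{n-1}\big|_{x=1}=0$ for $n\geq 2$ and $(1-x)^{n-1}\big|_{x=0}=1$ yields $n\int_0^1 u'(x)(1-x)^{n-1}\,dx=-nu(0)+n(n-1)\mu_{n-2}(u)$, so that
\[
a=-nu(0)+n(n-1)\mu_{n-2}(u).
\]
(For $n=1$ the prefactor $n(n-1)$ neutralizes the otherwise-undefined $\mu_{-1}(u)$.) Next I would insert $P_n\overline h(0)=-\overline{\mu_n(h)}$ and $P_n\overline h(1)=\overline{\mu_0(h)}-\overline{\mu_n(h)}$ from~\eqref{Pn02} into the boundary term, obtaining $[uP_n\overline h]_0^1=u(1)\overline{\mu_0(h)}+(u(0)-u(1))\overline{\mu_n(h)}$. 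Collecting in~\eqref{serge30/08} the coefficient of each moment, the $\overline{\mu_0(h)}$-coefficient is $u(1)$, the $\overline{\mu_1(h)}$-coefficient is $-a=nu(0)-n(n-1)\mu_{n-2}(u)$, and the $\overline{\mu_n(h)}$-coefficient is $(u(0)-u(1))+a=(1-n)u(0)-u(1)+n(n-1)\mu_{n-2}(u)$, which are precisely the three components of the vector in the statement.

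For the density step, both sides of the desired identity depend continuously on $h$ in the $L^2(0,1)$-topology: on the left via Cauchy--Schwarz in $H^{-1}(T)$ combined with the embedding $L^2(0,1)\hookrightarrow H^{-1}(T)$ coming from the pivot structure, and on the right because $(u|\cdot)_{L^2}$ is $L^2$-continuous and each $\mu_k$ is bounded on $L^1(0,1)\supset L^2(0,1)$. Since $\mathcal D(T)\subset H^1(T)$ is dense in $L^2(0,1)$, the identity extends to all $h\in L^2(0,1)$. The main obstacle in the whole argument is the double integration by parts that produces the $\mu_{n-2}(u)$ correction; once this is handled, the remaining steps are essentially bookkeeping.
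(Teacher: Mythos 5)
Your argument is correct and is essentially the paper's own proof: the paper likewise starts from~\eqref{serge30/08}, evaluates $a$ via $\langle P_n(u''),1\rangle=-\langle u',(J_n1)'\rangle$ and a second integration by parts to obtain $a=n(n-1)\mu_{n-2}(u)-nu(0)$, inserts~\eqref{Pn02}, and regroups the coefficients of $\overline{\mu_0(h)}$, $\overline{\mu_1(h)}$, $\overline{\mu_n(h)}$, your explicit $L^2$-density step being a harmless extra precaution that the paper leaves implicit. Only your parenthetical on $n=1$ is loosely justified, since your closed form for $a$ is not valid there (for $n=1$ one has $J_1 1\equiv 0$, hence $a=u(1)-u(0)$), but this is immaterial: for $n=1$ the factor $\overline{\mu_1(h)-\mu_n(h)}$ multiplying $a$ vanishes, which is exactly how the paper's preceding remark dismisses that case before treating $n\ge 2$.
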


\begin{proof}
The case $n=1$ has already been discussed. For $n\geq 2$, our starting point is again~\eqref{serge30/08}. From the  property $J_n 1\in H^1_0(0,1)$ and an integration by parts, we can write
\begin{eqnarray*}
\langle P_n (u''),1\rangle &=& -\langle u', (J_n 1)'\rangle\\
&=& \int_0^1 u'(x) (1-n(1-x)^{n-1})\,dx\\
&=& u(1)-u(0)-n (n-1)\int_0^1 u(x)   (1-x)^{n-2})\,dx-n[u(x) (1-x)^{n-1}]_0^1
\\
&=& u(1)-u(0)-n (n-1)\mu_{n-2}(u)+n u(0).
\end{eqnarray*}
Hence we deduce that for $n\geq 2$, the constant in~\eqref{adef} is given by
\[
a=n (n-1)\mu_{n-2}(u)-n u(0).
\]

Now, applying~\eqref{Pn02} we find
\begin{eqnarray*}
[u  \overline{P_n h}]_0^1 &=& u(1)\overline{P_n h(1)}-u(0) \overline{P_n h(0)}\\
&=& u(1)\overline{\left( \mu_0(h)-\mu_n(h)\right)}+u(0) \overline{\mu_n (h)}\\
&=& u(1) \overline{\mu_0(h)}+\left(u(0)-u(1)\right)\overline{\mu_n(h)}.
\end{eqnarray*}
Applying the previous identities to~\eqref{serge30/08} we obtain
\begin{eqnarray*}
&&\left({\rm Id}_m^{-1}(u'')|h\right)_{H^{-1}(T)}= -(u|h)_{L^2}+u(1) \overline{\mu_0(h)}+(u(0)-u(1))\overline{\mu_n(h)}\\
&&\qquad \qquad \qquad \qquad \qquad \qquad -\big(n (n-1)\mu_{n-2}(u)-n u(0)\big)
\overline{\left(\mu_1(h)-\mu_n(h)\right)},
\end{eqnarray*}
and the claimed formula follows.
\end{proof}

If $n=1$, this is the formula already obtained in~\cite[Lemma~2.13]{MugNic11}. Otherwise, we have obtained a more general identity that allows us to extend the study of diffusion processes under linear conditions on $\mu_0$ and $\mu_1$ (as e.g.\ in~\cite{BouBen96,MugNic11}) to linear conditions on $\mu_0$ and $\mu_n$ for general $n\in \mathbb N$. 

For each subspace $Y$ of $\mathbb C^2$ and each $p>1$ we can consider the reflexive Banach space
$$V^{(1)}_{Y,p}:=\left\{f\in L^p(0,1):\begin{pmatrix}
\mu_0(f)\\ \mu_1(f)
\end{pmatrix}\in Y\right\}.$$

\begin{lemma}\label{lemma:surj}
Let $k\in \mathbb N$. Then for all $m\in \mathbb N^k$ with pairwise distinct entries, the operator  
\[
\begin{pmatrix}
\mu_{m_1}  \\ \vdots\\ \mu_{m_k}
\end{pmatrix}:C^\infty[0,1]\to \mathbb C^k
\]
 is surjective.
\end{lemma}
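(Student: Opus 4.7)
The plan is to identify the operator with the $k$-tuple of linear functionals $(\mu_{m_1},\dots,\mu_{m_k})$ on $C^\infty[0,1]$ and to observe that, since the target space $\mathbb{C}^k$ is finite-dimensional, the operator is surjective if and only if these $k$ functionals are linearly independent on $C^\infty[0,1]$.

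First I would pick $(c_1,\dots,c_k)\in\mathbb{C}^k$ such that $\sum_{i=1}^{k} c_i \mu_{m_i}$ vanishes identically on $C^\infty[0,1]$. Rewriting this condition as
\[
\int_0^1 P(x) f(x)\,dx = 0 \qquad \text{for every } f\in C^\infty[0,1],
\]
with $P(x) := \sum_{i=1}^{k} c_i (1-x)^{m_i}$, and invoking the density of $C^\infty[0,1]$ in $L^2(0,1)$, I would conclude $P\equiv 0$. Since the $m_i$ are pairwise distinct nonnegative integers, the polynomials $(1-x)^{m_i}$ are linearly independent in $L^2(0,1)$ (their degrees are pairwise distinct), which forces $c_1=\dots=c_k=0$. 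Thus the $k$ functionals are linearly independent, and the operator therefore hits all of $\mathbb{C}^k$.

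The argument is essentially soft duality, and there is no serious obstacle. As a more constructive alternative, I would restrict the operator to the $k$-dimensional subspace $W:=\mathrm{span}\{(1-x)^j : j=0,\dots,k-1\}\subset C^\infty[0,1]$, on which it is represented by the matrix with entries $M_{ij}=1/(m_i+j+1)$. This is a Cauchy matrix whose determinant
\[
\det(M) = \frac{\prod_{1\le i<\ell\le k}(m_\ell - m_i)(\ell - i)}{\prod_{i,j}(m_i+j+1)}
\]
is nonzero precisely because the entries of $m$ are pairwise distinct, which exhibits an explicit right inverse already on $W$ and therefore yields the surjectivity of the operator on $C^\infty[0,1]$ directly.
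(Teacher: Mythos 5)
Your proof is correct, but it follows a different route than the paper. The paper proves a stronger assertion: the \emph{full} vector of moments $(\mu_0,\dots,\mu_{m_k})$ is already surjective when restricted to the space $\mathbb P_{m_k}(0,1)$ of polynomials of degree at most $m_k$; this is done by writing the monomials $1,(1-{\rm id}),\dots,(1-{\rm id})^{m_k}$ as an invertible linear combination of the shifted Legendre polynomials $Q_0,\dots,Q_{m_k}$ and using their pairwise orthogonality, so surjectivity of the selected sub-vector $(\mu_{m_1},\dots,\mu_{m_k})$ follows by projection. You argue instead (i) by soft duality: surjectivity onto $\mathbb C^k$ is equivalent to linear independence of the functionals $\mu_{m_1},\dots,\mu_{m_k}$, and a relation $\sum_i c_i\mu_{m_i}=0$ forces the polynomial $P=\sum_i c_i(1-{\rm id})^{m_i}$ to vanish (testing against $\overline P$, or by density of $C^\infty[0,1]$ in $L^2(0,1)$), hence $c_i=0$ because the degrees $m_i$ are pairwise distinct; and (ii) constructively, by restricting to ${\rm span}\{(1-{\rm id})^j: j=0,\dots,k-1\}$, where the operator is represented by the Cauchy matrix $\bigl(1/(m_i+j+1)\bigr)$, whose determinant is nonzero precisely because the $m_i$ are pairwise distinct. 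Both of your arguments are valid; the first is shorter but nonconstructive, while your Cauchy-matrix variant even shows that the prescribed values can be attained by polynomials of degree at most $k-1$, a different (and in some respects sharper) refinement than the paper's, which instead controls \emph{all} moments up to order $m_k$ at once using degree at most $m_k$ — the form in which the lemma is actually invoked only needs the statement as given, so either refinement suffices.
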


\begin{proof}
We are going to prove more, namely that the vector valued mapping
\[
\begin{pmatrix}
\mu_{0}  \\ \vdots\\ \mu_{m_k}
\end{pmatrix}
\]
is surjective from the space $\mathbb P_{m_k}(0,1)$ of all polynomials of degree at most $m_k$ to $\mathbb C^{m+1}$.
For $m\in \mathbb N$ fixed, it is well known that there exists a  $(m+1)\times (m+1)$ invertible matrix $M_m$ such that
\[
\begin{pmatrix}
1\\ (1-id)  \\ \vdots\\ (1-id)^m
\end{pmatrix}=M_m\begin{pmatrix}
Q_0\\ Q_1  \\ \vdots\\ Q_m
\end{pmatrix},
\]
where $Q_k$ is  the Legendre-type polynomial of degree $k$ defined on $(0,1)$
through the standard Legendre  polynomial $P_k$ of degree $k$ defined on $(-1,1)$ by
$$
Q_k(x)=P_k(2x-1), \quad\forall x\in (0,1).
$$
Setting
$$
\tilde\mu_{k}(h):=\int_0^1 h(x)Q_k(x)\,dx,\quad h\in L^1(0,1),
$$
we deduce that
\begin{equation}\label{serge26/11}
\begin{pmatrix}
\mu_{0}  \\ \vdots\\ \mu_{m}
\end{pmatrix}=M_m\begin{pmatrix}
\tilde\mu_{0}  \\ \vdots\\ \tilde\mu_{m}
\end{pmatrix}.
\end{equation}

As the Legendre  polynomials are pairwise orthogonal, the mapping
\[
\begin{pmatrix}
\tilde\mu_{0}  \\ \vdots\\ \tilde\mu_{m}
\end{pmatrix}:{\mathbb P_m}(0,1)\to \mathbb C^{m+1}
\]
is trivially surjective. By \eqref{serge26/11} and the invertibility of $M_m$,
we conclude that
\[
\begin{pmatrix}
\mu_{0}  \\ \vdots\\ \mu_{m}
\end{pmatrix}:{\mathbb P_m}(0,1)\to \mathbb C^{m+1}
\]
is also surjective.
\end{proof}


We denote by $H_Y$ the Hilbert space 
\begin{equation}
\label{defi_hy}
H_Y:=\left\{
\begin{array}{ll}
\{f\in H^{-1}(T):\mu_0(f)=0\},\qquad &\hbox{if }Y=\{0\}^2\hbox{ or }Y=\{0\}\times {\mathbb C},\\
H^{-1}(T), & \hbox{otherwise}.
\end{array}
\right.
\end{equation}
It has been shown in~\cite{MugNic11} that an equivalent inner product is given in either case by
\begin{equation}
\label{prodH}
(f|g)_{H_Y}:=\int_0^1 P_1 f(x) \overline{P_1 g(x)} dx+\mu_0(f)\overline{\mu_0( g)}.
\end{equation}

\begin{lemma}\label{lemmadensity1}
The space $V^{(1)}_{Y,p}$ is dense in $H_Y$ for each $Y$ subspace of $\mathbb C^2$.
\end{lemma}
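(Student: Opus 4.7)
The plan is to prove density by a two-stage approximation. First I would approximate $u\in H_Y$ by an $L^p$-sequence in the $H^{-1}(T)$-topology (enforcing $\mu_0=0$ along the way when $H_Y$ requires it), then add a small corrector of vanishing $H^{-1}(T)$-norm to push $(\mu_0,\mu_1)$ into $Y$. For the first stage, density of $\mathcal D(T)\subset L^p(0,1)$ in $H^{-1}(T)$ yields $v_k\in \mathcal D(T)$ with $v_k\to u$ in $H^{-1}(T)$; in the cases $Y\in\{\{0\}^2,\{0\}\times\mathbb C\}$, I replace $v_k$ by $\tilde v_k:=v_k-\mu_0(v_k)\cdot 1$, which still lies in $\mathcal D(T)$, satisfies $\mu_0(\tilde v_k)=0$, and converges to $u$ because $\mu_0\in(H^{-1}(T))^*$ forces $\mu_0(v_k)\to\mu_0(u)=0$. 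In the remaining cases I simply set $\tilde v_k:=v_k$.

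The main obstacle is the construction of a corrector sequence $\psi_k\in L^\infty(0,1)$ with $\mu_0(\psi_k)=0$, $\mu_1(\psi_k)=1$, and $\|\psi_k\|_{H^{-1}(T)}\to 0$. A trapezoidal choice works: let $F_k\in H^1(T)$ be the continuous piecewise linear function vanishing at $0,1$ and equal to $1$ on $[1/k,1-1/k]$, and set $\psi_k:=c_k F_k'$ with $c_k$ the constant that normalises $\mu_1(\psi_k)$ to $1$; a short calculation gives $\mu_1(F_k')=1-\tfrac1k$, hence $c_k\to 1$. Since $\mu_0(\psi_k)=c_k(F_k(1)-F_k(0))=0$, Remark~\ref{rem-mugnic11} gives $\|\psi_k\|_{H^{-1}(T)}\simeq\|P_1\psi_k\|_{L^2}$, which by Lemma~\ref{lemma1}.(1) equals $\|c_k F_k-1\|_{L^2}$ and tends to $0$ by dominated convergence.

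With $\psi_k$ in hand, set $u_k:=\tilde v_k-\alpha_k\psi_{m_k}$. Because $\mu_0(\psi_{m_k})=0$ and $\mu_1(\psi_{m_k})=1$, varying $\alpha_k$ alters only the $\mu_1$-coordinate of $(\mu_0(u_k),\mu_1(u_k))$; a short case check over the subspaces $Y\subseteq\mathbb C^2$ then shows that a suitable $\alpha_k\in\mathbb C$ exists, invoking Step~1 to guarantee $\mu_0(\tilde v_k)=0$ whenever $Y$ demands it. Finally I pick $m_k$ large enough that $|\alpha_k|\,\|\psi_{m_k}\|_{H^{-1}(T)}<1/k$. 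Then $u_k\in V^{(1)}_{Y,p}$, $u_k\to u$ in $H^{-1}(T)$ and, by the equivalent inner product~\eqref{prodH}, also in $H_Y$. Everything beyond the corrector construction is bookkeeping.
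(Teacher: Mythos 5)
Your argument is correct, and it is genuinely different from the one in the paper. The paper proves density by showing the orthogonal complement is trivial: for $f\in H_Y$ orthogonal to $V^{(1)}_{Y,p}$ it tests against $g=-h''$ with $h$ in the auxiliary space $W$ of \eqref{definW}, uses $J_1P_1g=h-h(1)$ to reach $f=\mu_0(f)\delta_1$ as in \eqref{sergestar}, and then invokes the surjectivity Lemma~\ref{lemma:surj} to force $\mu_0(f)=0$. You instead argue constructively: approximate in $H^{-1}(T)$ by smooth functions, normalize $\mu_0$ by subtracting a constant (legitimate, since $\mu_0$ is continuous on $H^{-1}(T)$ and the constant $1$ has finite $H^{-1}(T)$-norm), and then repair $\mu_1$ with the corrector $\psi_k=c_kF_k'$. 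The corrector is the crux and it checks out: $\mu_0(\psi_k)=0$, $\mu_1(F_k')=\int_0^1F_k=1-\tfrac1k$ so $c_k\to1$, and by Lemma~\ref{lemma1}.(1) and Remark~\ref{rem-mugnic11} one has $\|\psi_k\|_{H^{-1}(T)}\simeq\|c_kF_k-1\|_{L^2}\to0$; conceptually $\psi_k$ is a mollified $\delta_0-\delta_1$, which vanishes in $H^{-1}(T)$ because evaluations at $0$ and $1$ coincide on $H^1(T)$, yet carries unit first moment -- precisely the failure of $\mu_1$ to be $H^{-1}(T)$-continuous that makes the moment adjustable at no cost in norm. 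The subsequent case check over the subspaces $Y$ is indeed routine, since only the $\mu_1$-coordinate ever needs adjusting once $\mu_0$ is handled, and choosing $m_k$ after $\alpha_k$ disposes of any growth of $\alpha_k$. What each route buys: yours is more elementary and self-contained (it needs only density of ${\mathcal D}(T)$ in $H^{-1}(T)$, Lemma~\ref{lemma1}.(1) and the equivalent inner product \eqref{prodH}, and avoids both $W$ and Lemma~\ref{lemma:surj}); the paper's duality computation, on the other hand, produces the identity $f=\mu_0(f)\delta_1$ and the $W$-machinery that are reused essentially verbatim in Lemma~\ref{lemmadensity3} for conditions on $\mu_0$ and $\mu_n$, whereas your scheme would require building an analogous corrector normalized by $\mu_n$ for each $n$ (which the same endpoint-concentration idea does provide, so your method also extends, but it is not already packaged for that purpose).
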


\begin{proof}
By construction, ${V^{(1)}_{Y,p}}\subset H_Y$ and hence the inclusion $\overline{V^{(1)}_{Y,p}}\subset H_Y$ is clear. 

The proof of the converse inclusion is divided in several steps, but follows closely~\cite[Cor.~2.4 and Thm.~4.2]{MugNic11}. We first prove the assertion for $Y=\{0\}^2$. 

\noindent
(1) To begin with, observe that it follows from~\eqref{prodH} that
\begin{equation}
\label{prodHbis}
(f|g)_{H_{\{0\}^2}}=\int_0^1 P_1 f(x) \overline{P_1 g(x)} dx,\qquad \forall f,g\in H_{\{0\}^2}.
\end{equation}
and 
\begin{equation}\label{4}
\|u\|_{H^{-1}(T)}\simeq \|P_1 u\|_{L^2}\quad\forall u\in H_{\{0\}^2}.
\end{equation}
To prove the claimed inclusion, we show that each $f\in H_{\{0\}^2}$ that is orthogonal to $\overline{V^{(1)}_{\{0\}^2,p}}$
for the inner product $(\cdot|\cdot)_{H_{\{0\}^2}}$ is identically zero. 
In fact, let $f\in H_{\{0\}^2}$ be orthogonal to $\overline{V^{(1)}_{\{0\}^2,p}}$. Then it satisfies
$$
(P_1 f| P_1 g)_{L^2}=0,\qquad \forall g\in  V^{(1)}_{\{0\}^2,p}.
$$
But according to its definition (\ref{0}) we get equivalently
\begin{equation}\label{5}
\langle f, J_1 P_1g\rangle=0,\qquad \forall g\in  V^{(1)}_{\{0\}^2,p}.
\end{equation}
Now define
\begin{equation}
\label{definW}
W:=\{u\in H^1(T)\cap W^{2,p}(0,1): u'(0)=u'(1)=0\}.
\end{equation}
This space is dense in $H^1(T)$ since for $u\in H^1(T)$, 
$u-u(0)$ belongs to $H^1_0(0,1)$.
Hence for all $u\in H^1(T)$ there exists a sequence of $\varphi_n\in {\mathcal D}(0,1)$ such that
$$
\varphi_n\to u-u(0) \quad \hbox{ in } H^1_0(0,1),
$$
and therefore
 $\varphi_n+u(0)\in W$ with
$$
\varphi_n+u(0)\to u \quad \hbox{ in } H^1(T).
$$

Now for $h\in W$, we take $g:=-h''\in V^{(1)}_{\{0\}^2,p}$. By construction
$$
J_1 P_1g(x)=h(x)-h(1) ,\qquad \forall x\in (0,1).
$$
Plugging this identity in~\eqref{5} yields
\begin{equation}\label{sergeortho}
\langle f, h-h(1)\rangle=0,\qquad \forall h\in W.
\end{equation}
But due to the fact that $\mu_0( f)=0$, we deduce that
$$
\langle f, h\rangle=0,\qquad \forall h\in W.
$$
As $W$ is dense in $H^1(T)$, we conclude that $f=0$.

If $Y=\{0\}\times {\mathbb C}$, we see that
$$H_Y=\overline{V^{(1)}_{\{0\}^2,p}}\subset \overline{V^{(1)}_{\{0\}\times \mathbb C,p}}\subset H_Y,$$
and the assertion follows. 

(2) We consider the remaining cases. More precisely, it suffices to study the case of 1-dimensional $Y$ with $Y\not=\{0\}\times \mathbb C$, because once proved this for $Y=\mathbb C^2$ and any 1-dimensional subspace $Y_0$ we have
$$H_Y=\overline{V^{(1)}_{Y_0,p}}\subset \overline{V^{(1)}_{\mathbb C^2,p}}=\overline{L^p(0,1)}= H_Y.$$

Hence, assume that there exists $\alpha \in \mathbb R$ such that $Y$ is the set of all $(z_0,z_1)\in \mathbb C^2$ satisfying
\begin{equation}\label{carY}
 z_1=\alpha z_0.
\end{equation}
Let $f\in H^{-1}(T)$ be such that
\begin{equation}\label{orthoV1bis}
(f|g)_{H_Y}=0,\qquad \forall g\in V^{(1)}_{Y,p}.
\end{equation}
Since $V^{(1)}_{\{0\}^2,p}\subset V^{(1)}_{Y,p}$, one has
$$
(f|g)_{H^{-1}(T)}=0,\qquad \forall g\in V^{(1)}_{\{0\}^2,p},
$$
and reasoning as in (1) we deduce that
(\ref{sergeortho}) holds for the space $W$ defined in~\eqref{definW}. Since $W$ is dense in $H^1(T)$, this is equivalent to
$$
\langle f, h-h(1)\rangle=0,\qquad \forall h\in H^1(T),
$$
or again
\begin{equation}\label{sergestar}
f=\mu_0( f)\delta_1 \hbox{ in } H^{-1}(T).
\end{equation}
Coming back to (\ref{orthoV1bis}) and taking into account Remark~\ref{rem-mugnic11} we get
\begin{equation}\label{mu0}
0= \mu_0( f) \left((P_1 \delta_1|P_1 g)_{L^2}+\mu_0( \delta_1)\overline{\mu_0(g)}\right)=\mu_0( f) \overline{\mu_0(g)},\qquad \forall g\in V^{(1)}_{Y,p}.
\end{equation}
Now, by Lemma~\ref{lemma:surj} there exists $g\in L^p(0,1)$ such that
$$
\begin{pmatrix}
\mu_0(g) \\ \mu_1(g)
\end{pmatrix}
=
\begin{pmatrix}
1\\ \alpha
\end{pmatrix},
$$
where $\alpha$ is as in~\eqref{carY}. Hence, we can plug such a $g\in V^{(1)}_{Y,p}$ in~\eqref{mu0} and we find $\mu_0( f)=0$, hence, by~\eqref{sergestar}, $f=0$. This concludes the proof.
\end{proof}

\begin{lemma}\label{lemmadensity2}
Let $n\in \mathbb N$. Then the vector space
\begin{equation}
\label{defi:V^n}
V^{(n)}_{\{0\}^2,p}:=\{u\in L^p(0,1):\mu_0(u)=\mu_n(u)=0\},
\end{equation}
and hence also
$$V^{(n)}_{\{0\}\times \mathbb C,p}:=\{u\in L^p(0,1):\mu_0(u)=0\},$$
are dense in 
\[
H:=\{f\in H^{-1}(T):\mu_0(f)=0\}.
\]
\end{lemma}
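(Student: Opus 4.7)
The second assertion follows trivially from $V^{(n)}_{\{0\}^2, p} \subseteq V^{(n)}_{\{0\}\times\mathbb{C}, p}$ combined with density of the smaller space, so the plan is to concentrate on proving that $V^{(n)}_{\{0\}^2, p}$ is dense in $H$.

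The strategy is to leverage Lemma~\ref{lemmadensity1} (which gives density of $V^{(1)}_{\{0\}^2, p}$ in $H_{\{0\}^2} = H$) and to show that each $g \in V^{(1)}_{\{0\}^2, p}$ can be approximated in the $H$-norm by elements of $V^{(n)}_{\{0\}^2, p}$. The two spaces differ only by the constraint imposed on $\mu_n$ rather than on $\mu_1$; the key observation is that, for $n\ge 1$, the functional $\mu_n$ does \emph{not} extend to a continuous linear form on $H^{-1}(T)$, because its natural representative $(1-\cdot)^n$ does not belong to $H^1(T)$ (its values at $0$ and $1$ disagree).

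Exploiting this failure of continuity, I would construct an auxiliary sequence $\phi_k \in L^p(0,1)$ enjoying the three properties $\mu_0(\phi_k)=0$ (so that $\phi_k \in H$), $\mu_n(\phi_k)\to c$ for some $c\ne 0$, and $\|\phi_k\|_H \to 0$. A concrete candidate is
\[
\phi_k := k\,\mathbf{1}_{[0,1/k]} - k\,\mathbf{1}_{[1-1/k,1]},
\]
which converges in $\mathcal{D}'(0,1)$ to $\delta_0 - \delta_1$; a direct computation yields $\mu_n(\phi_k)\to 1$, while explicit evaluation of $\mathcal{I}\phi_k$ and $\mu_1(\phi_k)$ together with~\eqref{equivnorm} gives $\|\phi_k\|_H = O(k^{-1/2})$. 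With such $\phi_k$ in hand, for each $g\in V^{(1)}_{\{0\}^2, p}$ one sets
\[
g_k := g - \frac{\mu_n(g)}{\mu_n(\phi_k)}\, \phi_k \in L^p(0,1).
\]
By construction $\mu_0(g_k) = 0$ and $\mu_n(g_k) = 0$, so $g_k \in V^{(n)}_{\{0\}^2, p}$; moreover $\|g_k - g\|_H = \bigl|\mu_n(g)/\mu_n(\phi_k)\bigr|\,\|\phi_k\|_H \to 0$, since the scalar prefactor converges to $-\mu_n(g)/c$ and the $H$-norm of $\phi_k$ tends to zero. Combining this approximation with the density of $V^{(1)}_{\{0\}^2, p}$ in $H$ from Lemma~\ref{lemmadensity1} yields the desired density.

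The only delicate point is the concrete estimate $\|\phi_k\|_H = O(k^{-1/2})$, which requires piecewise computation of $\mathcal{I}\phi_k$ and evaluation of the $L^2$-norm of $P_1 \phi_k = \mathcal{I}\phi_k - \mu_1(\phi_k)$---an elementary but careful calculation. A more abstract alternative would be to mimic the orthogonal-complement argument of Lemma~\ref{lemmadensity1} using $J_n, P_n$ in place of $J_1, P_1$ (justified by the equivalent inner product in Remark~\ref{rem-mugnic11}) and an adapted test function space $W_n$ analogous to~\eqref{definW} accommodating the compatibility condition from $\mu_n(-h'')=0$; however, proving the required density of such a $W_n$ in $H^1(T)$ appears combinatorially more delicate for $n\ge 2$, so the explicit approximation by $\phi_k$ is the most direct route.
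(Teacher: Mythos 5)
Your proposal is correct, and its core mechanism differs from the paper's. Both proofs reduce the problem via Lemma~\ref{lemmadensity1} to adjusting the $n$-th moment of an approximant coming from the case $n=1$, but the adjustments are genuinely different. The paper writes $(1-t)^n=(1-t)+\tilde p(t)$ with $\tilde p\in H^1(T)$, so that $v\mapsto\langle\tilde p,v\rangle$ is continuous on $H^{-1}(T)$; it then corrects the approximating sequence $(v_k)\subset\Ker\mu_1$ by a multiple of the \emph{fixed} polynomial $\tilde p$, with coefficient $\mu_n(v_k)/\mu_n(\tilde p)$. Since this coefficient tends to $c/\mu_n(\tilde p)$ with $c=\langle\tilde p,w\rangle$, which need not vanish, the paper must distinguish the cases $c=0$ and $c\neq 0$, handling the latter by a further decomposition $w=w_1+\alpha v_0$ with $v_0\in\Ker\mu_n\setminus\Ker\mu_1$. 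You instead correct a fixed approximant $g\in V^{(1)}_{\{0\}^2,p}$ by the \emph{concentrating} sequence $\phi_k=k\,\mathbf{1}_{[0,1/k]}-k\,\mathbf{1}_{[1-1/k,1]}$, which has $\mu_0(\phi_k)=0$, $\mu_n(\phi_k)\to 1$, and $\|\phi_k\|_{H}=O(k^{-1/2})$ (your computation of $P_1\phi_k=\mathcal I\phi_k-\mu_1(\phi_k)$ and the resulting bound check out, as does the nonvanishing of $\mu_n(\phi_k)$ for large $k$); since the corrector itself is small in $H$ while its $n$-th moment is bounded away from zero, the size of the coefficient $\mu_n(g)/\mu_n(\phi_k)$ is irrelevant and no case distinction is needed. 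What each route buys: the paper's corrector is a single explicit polynomial and the bookkeeping is done by the continuity of $\langle\tilde p,\cdot\rangle$ on $H^{-1}(T)$, at the cost of the two-case analysis; your argument is more uniform (one stroke for every $g$ and every $n\in\mathbb N$, exactly exploiting the failure of $\mu_n$ to extend continuously to $H$, which is the structural reason density can hold at all), at the cost of the explicit, though elementary, $O(k^{-1/2})$ estimate. Your observation that the second assertion follows from the inclusion $V^{(n)}_{\{0\}^2,p}\subseteq V^{(n)}_{\{0\}\times\mathbb C,p}\subseteq H$ matches the paper's treatment.
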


\begin{proof}
The proof is based on Lemma~\ref{lemmadensity1}, i.e., on the validity of the same assertion in the special case of $n=1$. Let $w\in H$: we are looking for a sequence $(w_k)_{k\in \mathbb N}\subset {\Ker}\mu_n$  here and below in the proof, $\mu_n$
is seen as a mapping from $H\cap L^p(0,1)$ into $\mathbb C$) that approximates $w$ in $H^{-1}(T)$. By Lemma~\ref{lemmadensity1}, we already know that this is possible for $n=1$: that is, there exists a sequence $(v_k)_{k\in \mathbb N}\subset {\Ker}\mu_1$ that approximates $w$ in $H^{-1}(T)$. Now,  for $n\geq 2$ observe that the binomial formula yields
\[
(1-t)^n=(1-t)+\tilde{p}(t),\qquad t\in (0,1),
\]
with $\tilde{p}\in H^1(T)$, and accordingly 
\[
\mu_n(v)=\mu_1(v)+\langle \tilde{p},v\rangle_{H^1(T),H^{-1}(T)}\quad \forall v\in L^1(0,1),
\]
and in particular
\[
0=\mu_1(v_k)=\mu_n(v_k)-\langle \tilde{p},v_k\rangle_{H^1(T),H^{-1}(T)}\quad \forall k\in \mathbb N.
\]
Thus, by continuity,
\[
\lim_{k\to \infty} \mu_n (v_k)=\lim_{k\to \infty}\langle \tilde{p},v_k\rangle =\langle \tilde{p},w\rangle=:c.
\]
We distinguish the two cases $c=0$ and $c\neq 0$.
\begin{enumerate}
\item If $c=0$, it suffices to take 
\[
w_k:=v_k -\frac{\mu_n(v_k)}{\mu_n (\tilde{p})}\tilde{p}
\]
(observe that $\mu_n(\tilde{p})=\int_0^1 |(1-t)|^{2n}dt\neq 0$),
so that $w_k \in \Ker \mu_n$ for all $k\in \mathbb N$ and moreover
\[
\lim_{k\to \infty} w_k=w.
\]
\item If $c\neq 0$, write $w=w_1+\alpha v_0$, with  $v_0 \in \Ker \mu_n$ such that $\mu_1(v_0)=\langle \tilde{p},v_0\rangle\neq 0$, i.e., $v_0 \in \Ker \mu_n \setminus \Ker \mu_1$. Now, set 
\[
\alpha:=\frac{\langle \tilde{p}, w\rangle}{\langle \tilde{p}, v_0\rangle }
\]
and observe that for $w_1:=w-\alpha v_0$ one has $\langle \tilde{p}, w_1\rangle=0$, hence   $w_1\in \overline{\Ker \mu_n}$  owing to the first case.     As $v_0 \in \Ker \mu_n$
we conclude that $w\in \overline{\Ker \mu_n}$ as well. 
\end{enumerate}
This concludes the proof.
\end{proof}

Also the following holds.

\begin{lemma}\label{lemmadensity3}
Let $n\in \mathbb N$ and $Y$ be a subspace of $\mathbb C^2$ with $Y\not=\{0\}^2$ and $Y\not= \{0\}\times \mathbb C$. Then the vector space
$$V^{(n)}_{Y,p}:=\left\{u\in L^p(0,1):\begin{pmatrix}\mu_0(u)\\\mu_n(u)\end{pmatrix}\in Y\right\}$$
is dense in $H^{-1}(T)$.
\end{lemma}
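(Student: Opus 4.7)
The plan is to reduce the claimed density to Lemma~\ref{lemmadensity2} by a simple one-parameter shift argument based on Lemma~\ref{lemma:surj}. Since $Y$ is a subspace of $\mathbb{C}^2$ different from both $\{0\}^2$ and $\{0\}\times \mathbb{C}$, I first observe that there exists an element of the form $(1,\alpha)\in Y$ for some $\alpha\in \mathbb{C}$: picking any nonzero $(\beta_0,\beta_n)\in Y$, the first component must be nonzero (otherwise $Y$ would contain, and in the one-dimensional case equal, $\{0\}\times \mathbb{C}$; the case $Y=\mathbb{C}^2$ is of course subsumed), so rescaling does the job.

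By Lemma~\ref{lemma:surj} applied to the pair $(m_1,m_2)=(0,n)$, I can then fix a function $\phi\in C^\infty[0,1]\subset L^p(0,1)$ with $\mu_0(\phi)=1$ and $\mu_n(\phi)=\alpha$; in particular $\phi\in V^{(n)}_{Y,p}$. Given an arbitrary $f\in H^{-1}(T)$, I would split it as
\[
f=\bigl(f-\mu_0(f)\phi\bigr)+\mu_0(f)\phi=:g+\mu_0(f)\phi,
\]
so that $\mu_0(g)=0$ and hence $g\in H$. Lemma~\ref{lemmadensity2} provides a sequence $(v_k)_{k\in \mathbb{N}}\subset V^{(n)}_{\{0\}^2,p}\subset V^{(n)}_{Y,p}$ such that $v_k\to g$ in $H^{-1}(T)$, and setting $u_k:=v_k+\mu_0(f)\phi\in L^p(0,1)$ one computes
\[
\begin{pmatrix}\mu_0(u_k)\\ \mu_n(u_k)\end{pmatrix}=\mu_0(f)\begin{pmatrix}1\\ \alpha\end{pmatrix}\in Y,
\]
so that $u_k\in V^{(n)}_{Y,p}$, and clearly $u_k\to f$ in $H^{-1}(T)$ by construction.

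The argument is essentially bookkeeping once the correct splitting is set up, and I do not expect any substantive obstacle beyond using Lemma~\ref{lemmadensity2} as a black box. The only point where the two excluded subspaces enter is in guaranteeing the existence of the element $(1,\alpha)\in Y$: this is exactly what allows me to adjust the total mass of the approximating sequence by adding a multiple of $\phi$ while keeping the pair $(\mu_0,\mu_n)$ inside $Y$. Without this we would lose the ability to cover the ``mass component'' of $f$, which is precisely why the two degenerate subspaces have to be treated separately (as was done in Lemma~\ref{lemmadensity2}).
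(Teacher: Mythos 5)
Your argument is correct, and it is a genuinely different proof from the one in the paper. You reduce the statement to Lemma~\ref{lemmadensity2} by a constructive shift: since $Y\neq\{0\}^2$ and $Y\neq\{0\}\times\mathbb C$, the subspace $Y$ contains a vector $(1,\alpha)$, Lemma~\ref{lemma:surj} supplies a smooth corrector $\phi$ with $\mu_0(\phi)=1$, $\mu_n(\phi)=\alpha$, and then any $f\in H^{-1}(T)$ splits as $(f-\mu_0(f)\phi)+\mu_0(f)\phi$, the first summand lying in $H=\{g\in H^{-1}(T):\mu_0(g)=0\}$ (this uses that $\mu_0$ extends boundedly to $H^{-1}(T)$, as noted in the paper), so that approximants from $V^{(n)}_{\{0\}^2,p}$ shifted by $\mu_0(f)\phi$ stay in $V^{(n)}_{Y,p}$ and converge to $f$. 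The paper instead argues by Hilbert-space orthogonality: a functional $f$ orthogonal to $V^{(n)}_{Y,p}$ is first shown, by testing against $V^{(n)}_{\{0\}^2,p}$ via the space $W$ of~\eqref{definW}, to be of the form $\mu_0(f)\delta_1$, and then the same surjectivity lemma forces $\mu_0(f)=0$; the case $Y=\mathbb C^2$ is treated separately there, whereas your argument covers it uniformly. Your route is more elementary (explicit approximating sequences, no $\delta_1$ or projection machinery) and cleanly isolates where the two excluded subspaces enter, at the price of using Lemma~\ref{lemmadensity2} as a black box; note also that, exactly like the paper itself does, you apply Lemma~\ref{lemma:surj} with the index $0$ included, which its proof (surjectivity of $(\mu_0,\dots,\mu_{m_k})$ on polynomials) justifies even though the statement is phrased for indices in $\mathbb N$. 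Only a cosmetic remark: your claim that a nonzero element of $Y$ must have nonzero first component is literally true only when $\dim Y=1$, but you acknowledge that $Y=\mathbb C^2$ is trivial, so the existence of $(1,\alpha)\in Y$ stands in all admissible cases.
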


\begin{proof}
The proof closely follows that of~\cite[Thm.~4.2]{MugNic11}. Like in that proof, we begin by observing that if $Y\not=\{0\}^2$ and $Y\not= \{0\}\times \mathbb C$, then either $Y=\mathbb C^2$ or there exists $\alpha \in \mathbb C$ such that $Y$ is the set of all $(z_0,z_1)\in \mathbb C^2$ satisfying
\begin{equation}\label{carYb}
 z_1=\alpha z_0.
\end{equation}

Let us first assume that $Y\not=\mathbb C^2$ and let $f\in H^{-1}(T)$ be such that
\begin{equation}\label{orthoV1}
(f|g)_{H^{-1}(T)}=0,\qquad \forall g\in V^{(n)}_{Y,p},
\end{equation}
and in particular
\begin{equation*}\label{orthoV1bis-c}
(f|g)_{H^{-1}(T)}=0,\qquad \forall g\in V^{(n)}_{\{0\}^2,p}.
\end{equation*}
Just like in the proof of Corollary~\ref{lemmadensity1} we deduce that~\eqref{sergeortho} holds for the space $W$ defined in~\eqref{definW}. Since $W$ is dense in $H^1(T)$, this is equivalent to
$$
\langle f, h-h(1)\rangle=0,\qquad \forall h\in H^1(T),
$$
or again
\begin{equation}\label{sergestar-b}
f=\mu_0( f)\delta_1 \quad\hbox{ in } H^{-1}(T).
\end{equation}
Plugging into~\eqref{orthoV1} we get
\begin{equation}\label{mu0-b}
0= \mu_0( f) \left((P_1\delta_1|P_1g)_{L^2}+\mu_0( \delta_1)\overline{\mu_0( g)}\right)=\mu_0( f) \overline{\mu_0(g)},\qquad \forall g\in V^{(n)}_{Y,p},
\end{equation}
because by Remark~\ref{munmun1}.(3) $P_1\delta_1=0$. By Lemma~\ref{lemma:surj} there exists $g\in L^p(0,1)$ such that
$$
\begin{pmatrix}
\mu_0(g) \\ \mu_n(g)
\end{pmatrix}
=
\begin{pmatrix}
1\\ \alpha
\end{pmatrix},
$$
where $\alpha$ is as in~\eqref{carYb}. Hence, we can plug such a $g\in V^{(n)}_{Y,p}$ in~\eqref{mu0-b} and we find $\mu_0( f)=0$, hence, by~\eqref{sergestar-b}, $f=0$. This shows that $V^{(n)}_{Y,p}$ is dense in $H^{-1}(T)$ if $Y\not=\mathbb C^2$. For the case $Y=\mathbb C^2$, however, by definition $V^{(n)}_{Y,p}=L^p(0,1)$, and the claimed density is clear.
\end{proof}

Summing up, we have proved the following $n$-th-moment-analogue of Lemma~\ref{lemmadensity1}.

\begin{cor}\label{lemmadensityn}
Let $n\in \mathbb N$ and $Y$ be a subspace of $\mathbb C^2$. The space $V^{(n)}_{Y,p}$ is dense in $H_Y$ for each $Y$ subspace of $\mathbb C^2$.
\end{cor}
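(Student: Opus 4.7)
The plan is to observe that this Corollary is a pure book-keeping statement: it combines the content of Lemmas~\ref{lemmadensity1}, \ref{lemmadensity2} and \ref{lemmadensity3}, after matching up the three possible shapes of $H_Y$ coming from the definition~\eqref{defi_hy} with the appropriate density lemma. So first I would simply split into cases according to the dichotomy appearing in~\eqref{defi_hy}, namely whether $Y\in\{\{0\}^2,\,\{0\}\times\mathbb C\}$ (in which case $H_Y=\{f\in H^{-1}(T):\mu_0(f)=0\}$) or not (in which case $H_Y=H^{-1}(T)$).

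In the first case I would invoke Lemma~\ref{lemmadensity2} twice: its first assertion gives density of $V^{(n)}_{\{0\}^2,p}$ in $\{f\in H^{-1}(T):\mu_0(f)=0\}$, and its second assertion gives density of $V^{(n)}_{\{0\}\times\mathbb C,p}$ in the same space. Since the definitions of $V^{(n)}_{\{0\}^2,p}$ and $V^{(n)}_{\{0\}\times\mathbb C,p}$ from~\eqref{defi:V^n} and the statement of Lemma~\ref{lemmadensity2} are the same symbols as in the present corollary, this settles both sub-cases. In the remaining case $Y\neq\{0\}^2$ and $Y\neq\{0\}\times\mathbb C$, I would directly appeal to Lemma~\ref{lemmadensity3}, which gives density of $V^{(n)}_{Y,p}$ in $H^{-1}(T)=H_Y$.

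Conceptually, the real work has already been carried out in Lemma~\ref{lemmadensity2}: the key step there is the binomial identity $(1-t)^n=(1-t)+\tilde p(t)$ with $\tilde p\in H^1(T)$, which lets one trade an approximating sequence in $\Ker\mu_1$ (provided by the case $n=1$ of Lemma~\ref{lemmadensity1}) for one in $\Ker\mu_n$ by subtracting off a scalar multiple of $\tilde p$; in Lemma~\ref{lemmadensity3} the core argument is the same orthogonality-to-$W$ computation as in Lemma~\ref{lemmadensity1}, combined with the surjectivity of the moment map $(\mu_0,\mu_n)$ from Lemma~\ref{lemma:surj}. Therefore the only obstacle I anticipate at the level of the corollary itself is notational: one has to verify that the subspaces $V^{(n)}_{Y,p}$ appearing here genuinely coincide with those in Lemmas~\ref{lemmadensity2} and~\ref{lemmadensity3}, and that the two descriptions of $H_Y$ in~\eqref{defi_hy} match the target spaces appearing there. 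Once this is checked, the proof is one line per case.
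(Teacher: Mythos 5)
Your proposal is correct and coincides with the paper's own treatment: the corollary is stated there precisely as a summary (``Summing up, we have proved\dots'') obtained by combining Lemma~\ref{lemmadensity2} (covering $Y=\{0\}^2$ and $Y=\{0\}\times\mathbb C$, where $H_Y=\{f\in H^{-1}(T):\mu_0(f)=0\}$) with Lemma~\ref{lemmadensity3} (covering all other subspaces, where $H_Y=H^{-1}(T)$), exactly as in your case split via~\eqref{defi_hy}. No further argument is needed.
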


\section{The linear case}\label{linearcase}

It is instructive to consider the linear case of $p=2$ first.

\begin{lemma}
Let $n\in \mathbb N$ and $Y$ be a subspace of $\mathbb C^2$. Then the sesquilinear form
 $$a(u,v):=\int_0^1 u(x)\overline{v(x)}\; dx,\qquad u,v\in V^{(n)}_{Y,2},$$
is densely defined, symmetric, continuous and coercive in $H_Y$.
\end{lemma}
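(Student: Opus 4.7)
The statement bundles together four checks for a sesquilinear form on a subspace of $L^2$; the hard work has already been done in the density results above, so the plan is essentially a synthesis.

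First, I would observe that $V^{(n)}_{Y,2}$ is a closed subspace of $L^2(0,1)$, hence a Hilbert space when equipped with the restriction of the $L^2$-inner product. Closedness follows because $\mu_0,\mu_n\colon L^2(0,1)\to\mathbb C$ are continuous linear functionals and $Y$ is a (closed) linear subspace of $\mathbb C^2$. In each case, the condition $(\mu_0(u),\mu_n(u))\in Y$ that cuts out $V^{(n)}_{Y,2}$ is compatible with the constraint (if any) defining $H_Y$ in \eqref{defi_hy}, so $V^{(n)}_{Y,2}\subset H_Y$ algebraically.

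Next I would check the four properties one by one. \emph{Densely defined}: this is exactly Corollary~\ref{lemmadensityn} specialized to $p=2$. For this to be meaningful one also needs the continuous embedding $V^{(n)}_{Y,2}\hookrightarrow H_Y$: using the equivalent inner product from Remark~\ref{rem-mugnic11}, one has
\[
\|u\|_{H_Y}^2\lesssim \|P_1 u\|_{L^2}^2+|\mu_0(u)|^2,
\]
and the two summands on the right are trivially controlled by $\|u\|_{L^2}^2$, since $|\mu_0(u)|\le\|u\|_{L^1}\le\|u\|_{L^2}$ and $P_1 u(x)=\mathcal I u(x)-\mu_1(u)$ by Lemma~\ref{lemma1}(1), giving $\|P_1 u\|_{L^\infty}\le 2\|u\|_{L^1}\le 2\|u\|_{L^2}$. \emph{Symmetry}: immediate from $\overline{u\bar v}=v\bar u$. \emph{Continuity on $V^{(n)}_{Y,2}$}: Cauchy--Schwarz in $L^2$ yields
\[
|a(u,v)|\le \|u\|_{L^2}\|v\|_{L^2},
\]
i.e.\ continuity with respect to the $V^{(n)}_{Y,2}$-norm. \emph{Coercivity}: since $a(u,u)=\|u\|_{L^2}^2$, the form is in fact strictly coercive in the strong sense
\[
a(u,u)=\|u\|_{V^{(n)}_{Y,2}}^2\qquad\forall u\in V^{(n)}_{Y,2},
\]
so no shift by a multiple of the $H_Y$-norm is needed.

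There is essentially no obstacle here: all non-trivial work was already carried out in Lemmas~\ref{lemmadensity1}--\ref{lemmadensity3} and their corollary. The only point one has to be slightly careful about is to spell out the continuous embedding $V^{(n)}_{Y,2}\hookrightarrow H_Y$, which is what gives meaning to the phrase ``densely defined in $H_Y$''; this follows from the explicit norm bound above via Remark~\ref{rem-mugnic11}.
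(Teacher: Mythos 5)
Your proof is correct and is exactly the argument the paper has in mind: the lemma is stated there without an explicit proof, density being precisely Corollary~\ref{lemmadensityn} for $p=2$, and the remaining properties (symmetry, continuity, coercivity, plus the continuous embedding $V^{(n)}_{Y,2}\hookrightarrow H_Y$ via Remark~\ref{rem-mugnic11} and the bound $\|P_1u\|_{L^2}+|\mu_0(u)|\lesssim\|u\|_{L^2}$) being the same elementary verifications you carry out. Nothing is missing.
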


Then, well-posedness of first and second order abstract Cauchy problem associated with this form follow directly, by means of the general theory of quadratic forms.

\begin{cor}
Let $n\in \mathbb N$ and $Y$ be a subspace of $\mathbb C^2$.  Then the operator $(A_Y ,D(A_Y))$ associated with $(a,V^{(n)}_{Y,2})$ is positive definite on $H$. In particular, $-A_Y $ generates a cosine operator function with associated phase space $V^{(n)}_{Y,2}\times H_Y$ and hence an exponentially stable, contractive, analytic semigroup $(e^{-tA_Y})_{t\ge 0}$ of angle $\frac{\pi}{2}$ on $H_Y$. This semigroup is immediately of trace class.
\end{cor}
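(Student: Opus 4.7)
The plan is to reduce the entire statement to the classical theory of closed, symmetric, continuous and coercive sesquilinear forms on a Hilbert space. The preceding lemma supplies exactly the required hypotheses for $(a,V^{(n)}_{Y,2})$ as a form on $H_Y$, so the first Kato representation theorem yields a self-adjoint operator $(A_Y,D(A_Y))$ with $(A_Y u\mid v)_{H_Y}=a(u,v)$ and form domain $D(A_Y^{1/2})=V^{(n)}_{Y,2}$. Since $a(u,u)=\|u\|_{L^2}^2\geq c\,\|u\|_{H_Y}^2$ by continuity of the inclusion $L^2(0,1)\hookrightarrow H_Y\subset H^{-1}(T)$, one immediately deduces $\sigma(A_Y)\subset[c,\infty)$, that is, $A_Y$ is positive definite on $H_Y$.

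Once self-adjointness and strict positivity are in hand, the remaining generation statements follow from the functional calculus for positive self-adjoint operators at essentially no further cost. In particular, $-A_Y$ generates a contractive $C_0$-semigroup, exponentially stable with rate at least $c$ and analytic on the open right half-plane, hence of angle $\frac{\pi}{2}$; moreover $iA_Y^{1/2}$ generates a unitary group on $H_Y$, so $\cos(tA_Y^{1/2})$ defines a cosine operator function whose associated phase space is exactly $D(A_Y^{1/2})\times H_Y=V^{(n)}_{Y,2}\times H_Y$.

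The delicate point — and the one I expect to be the main technical obstacle — is the trace class property. I would first observe that, via the isomorphism ${\rm Id}_m$ and the norm equivalence of Remark~\ref{rem-mugnic11}, the inclusion $V^{(n)}_{Y,2}\hookrightarrow H_Y$ factors through the compact embedding $L^2(0,1)\hookrightarrow H^{-1}(T)$; consequently $A_Y$ has compact resolvent and a discrete spectrum $0<\lambda_1\leq\lambda_2\leq\cdots\to\infty$. To conclude that $e^{-tA_Y}$ is trace class for every $t>0$, it suffices to verify $\sum_{k}e^{-t\lambda_k}<\infty$ for each $t>0$, for which a polynomial lower bound $\lambda_k\gtrsim k^2$ is more than enough.

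I would establish such a bound by a minimax/Courant comparison argument: the conditions defining $V^{(n)}_{Y,2}$ carve out at most a codimension-two subspace of $L^2(0,1)$, so the eigenvalues of $A_Y$ differ by at most a bounded shift of index from those of the form $a$ without moment constraints. The latter operator, read through the norm equivalence of Remark~\ref{rem-mugnic11}, is essentially the inverse of a self-adjoint realization of $-\partial_{xx}$ on $(0,1)$ (with some natural boundary or moment conditions), whose eigenvalues obey Weyl's asymptotics $\lambda_k\sim k^2$. The min-max principle then transfers the quadratic growth to $A_Y$, and the trace class conclusion follows.
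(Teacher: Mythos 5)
Your proposal is correct and follows essentially the same route as the paper, which gives no separate argument for this corollary but simply invokes the general theory of densely defined, symmetric, continuous, coercive forms (supplied by the preceding lemma) to obtain the self-adjoint, positive definite operator, the cosine family with phase space $V^{(n)}_{Y,2}\times H_Y$, and the exponentially stable, contractive, analytic semigroup of angle $\frac{\pi}{2}$. Your compact-resolvent plus min-max/Weyl argument ($\lambda_k\gtrsim k^2$ in one dimension, so $\sum_k e^{-t\lambda_k}<\infty$) correctly fills in the trace-class assertion that the paper leaves implicit.
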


\subsection{The case of $\mu_0(u)=0$}\label{sec:mu0}

It remains to determine the operator associated with the form, and hence the Cauchy problems actually solved by the cosine operator functions and the operator semigroup. To begin with, we consider the case where we impose $\mu_0(u)=0$, corresponding to setting $Y=\{0\}^2$ or $Y=\{0\}\times \mathbb C$.

\begin{theo}\label{identK}
{Let $n\in \mathbb N$ and define a bounded linear functional $\gamma:H^1(0,1)\to \mathbb C$ by
\begin{eqnarray}
\gamma(f):=
\left\{
\begin{array}
{ll}
(n-1)(2n-1) f(0)-(n-1)^2(2n-1)\mu_{n-2}(f),\quad &\hbox{if }n\ge 2,\\
0 &\hbox{if }n=1.
\end{array}
\right.
\label{serge30/08b}
\end{eqnarray}}
If either $Y=\{0\}^2$ or $Y=\{0\}\times \mathbb C$, then the operator $(A_Y ,D(A_Y))$ associated in $H_Y$ with the quadratic form $a$ has domain given by
\begin{eqnarray*}
D(A_Y)&=&\{u\in H^1(0,1):\mu_0(u)=\mu_n(u)=0\}\qquad \hbox{or}\\
D(A_Y)&=&\{u\in H^1(0,1):\mu_0(u)=0,\; u(0)=u(1)\}
\end{eqnarray*}
respectively, and its action is given in both cases by
\[
A_Y u={\rm Id}_m^{-1}(-u''+\gamma(u) (1-{\rm id})^{n-2}).
\]
\end{theo}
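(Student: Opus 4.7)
My strategy is to verify directly that the candidate operator
$$\tilde A_Y u := {\rm Id}_m^{-1}\bigl(-u'' + \gamma(u)(1-{\rm id})^{n-2}\bigr),$$
acting on the proposed domain, represents the form $a$; once this is established, self-adjointness of the form-associated operator will force $\tilde A_Y = A_Y$. The case $n=1$ is covered by~\cite{MugNic11}, so I restrict to $n\ge 2$ and work throughout with the equivalent inner product on $H^{-1}(T)$ built from $P_n$ (Remark~\ref{rem-mugnic11}), which is the one implicit in Theorem~\ref{ibp}.

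For $u$ in the proposed domain and $h\in V^{(n)}_{Y,2}$, I apply Theorem~\ref{ibp}. When $Y=\{0\}^2$, the constraints $\mu_0(h)=\mu_n(h)=0$ annihilate two of the three vector components in the boundary-term inner product. When $Y=\{0\}\times\mathbb C$ only $\mu_0(h)=0$ vanishes, but the condition $u(0)=u(1)$ makes the coefficients of $\overline{\mu_1(h)}$ and $\overline{\mu_n(h)}$ opposite in sign, and~\eqref{Pn01} combines the survivors into a single term. In both cases the identity collapses to
$$(u|h)_{L^2}= -({\rm Id}_m^{-1}(u'')|h)_{H^{-1}(T)} + c\,\overline{\mu_0(P_nh)},\qquad c:=nu(0)-n(n-1)\mu_{n-2}(u).$$

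Next I realize $c\,\overline{\mu_0(P_nh)}$ as $\gamma(u)\cdot({\rm Id}_m^{-1}((1-{\rm id})^{n-2})|h)_{H^{-1}(T)}$. Since $\mu_0$ vanishes on both arguments, Remark~\ref{rem-mugnic11} together with Remark~\ref{munmun1}(5) reduces the inner product to
$$({\rm Id}_m^{-1}((1-{\rm id})^{n-2})|h)_{H^{-1}(T)} = \bigl(P_n((1-{\rm id})^{n-2})\,\bigm|\,P_nh\bigr)_{L^2}.$$
A direct computation from~\eqref{2} yields $P_n((1-x)^{n-2}) = \tfrac{n}{(n-1)(2n-1)} - \tfrac{(1-x)^{n-1}}{n-1}$; by Remark~\ref{munmun1}(2) the $L^2$-pairing of $(1-x)^{n-1}$ with $P_nh$ vanishes, leaving only $\tfrac{n}{(n-1)(2n-1)}\,\overline{\mu_0(P_nh)}$. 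Solving $\gamma(u)\tfrac{n}{(n-1)(2n-1)}=c$ produces precisely the expression in~\eqref{serge30/08b}. This shows that $\tilde A_Y$ represents $a$ and hence that the claimed domain sits inside $D(A_Y)$ with the stated action.

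For the reverse inclusion, since $a$ is symmetric, continuous, densely defined and coercive, $A_Y$ is self-adjoint and bijective from $D(A_Y)$ onto $H_Y$. It therefore suffices to show that $\tilde A_Y\colon D(\tilde A_Y)\to H_Y$ is itself surjective, as then the two operators must coincide. Given $g\in H_Y$, I would solve the two-point problem $-u''+\gamma(u)(1-x)^{n-2}=\Phi$ (with $\Phi$ a representative of $g$ under ${\rm Id}_m$) as an ordinary ODE and determine the two integration constants from the side-conditions defining the proposed domain; because $\gamma(u)$ is itself a linear functional of $u$, this closes into a $2\times 2$ linear system. The main obstacle — and the only delicate bookkeeping — is verifying non-singularity of this system in each of the two cases for $Y$, which reduces to checking a determinant built from the integrals of $(1-x)^{n-2}$ against $1$, $(1-x)$ and $(1-x)^n$, all computed above.
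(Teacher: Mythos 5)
Your first half is sound: for $u$ in the proposed domain and $h\in V^{(n)}_{Y,2}$ your use of Theorem~\ref{ibp}, of \eqref{Pn01}, and of the computation $P_n\big((1-\cdot)^{n-2}\big)=\tfrac{n}{(n-1)(2n-1)}-\tfrac{(1-\cdot)^{n-1}}{n-1}$ (together with $\mu_{n-1}(P_nh)=0$ from Remark~\ref{munmun1}) is correct and reproduces exactly the computation the paper carries out in the converse part of the proof of Theorem~\ref{identK2} (cf.~\eqref{serge14/11}); it yields the right constant in \eqref{serge30/08b} and the inclusion of the claimed set into $D(A_Y)$ with the claimed action.

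The gap is the reverse inclusion $D(A_Y)\subset\{u\in H^1(0,1):\dots\}$, which is where the substance of the theorem lies (in particular the $H^1$-regularity of domain elements; the paper obtains it directly from the form identity by writing $f=-P_n(P_ng)+\Pi\big(P_n(P_ng)+f\big)$ with the projection $\Pi$ onto ${\rm span}\{1,(1-{\rm id})^n\}$, and then pins down $\gamma(f)$ and the side conditions via Theorem~\ref{ibp} and Lemma~\ref{lemma:surj}). Your substitute — $A_Y$ is bijective by coercivity, your candidate is contained in $A_Y$, so it suffices that the candidate be surjective — is a legitimate scheme, but the surjectivity is precisely what you do not prove, and the ``delicate bookkeeping'' you defer does not go through in the form you sketch. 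Writing $u=v+c_1+c_2\,{\rm id}+s\,\tilde w$ with $-v''=\Phi$, $\tilde w:=\tfrac{(1-{\rm id})^{n}}{n(n-1)}$ and $s:=\gamma(u)$, one finds $\gamma(\tilde w)=1$ and $\gamma(1)=0$; hence the self-consistency relation $s=\gamma(u)$ contains no $s$ at all (it is a constraint on $c_1,c_2$ only), so the problem does \emph{not} close into a $2\times2$ system in the integration constants as you claim, and one must instead check invertibility of the full $3\times3$ system formed with the two moment (or moment/periodicity) conditions. That system is in fact invertible — e.g.\ for $Y=\{0\}^2$ its determinant is a nonzero multiple of $\mu_n(\tilde w)-\tfrac{1}{n+1}\mu_0(\tilde w)=\tfrac{1}{n(n-1)}\big(\tfrac{1}{2n+1}-\tfrac{1}{(n+1)^2}\big)\neq0$ — but none of this verification appears in your argument, and without it (and the analogous check for $Y=\{0\}\times\mathbb C$) the equality $\tilde A_Y=A_Y$, hence the stated description of $D(A_Y)$, remains unproven.
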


We recall that ${\rm Id}_m$ is the isomorphism introduced in Remark~\ref{idmdef}. The above theorem shows that in particular
$$
A_Y u=-u''+\gamma(u) (1- id)^{n-2} \quad\hbox{ in } {\mathcal D}'(0,1).
$$

\begin{proof}
{
We only consider the case of $n\ge 2$, as the case of $n=1$ has been proved in~\cite[Thm.\ 3.1 and 3.3]{MugNic11}.}
Denote
$${\mathcal K}:=\{u\in H^1(0,1):\mu_0(u)=\mu_n(u)=0\}.$$
Let us first show the inclusion $D(A_Y)\subset {\mathcal K}$. Let $f\in D(A_Y)$. Then there exists $g\in H^{-1}(T)$ for which $\mu_0( g)=0$ (i.e., $g\in H_Y$) and such that
$$
(f|h)_{L^2}=a(f,h)\stackrel{!}{=}(g|h)_{H_{\{0\}^2}}=\int_0^1 (P_n g)(x)\overline{(P_n h)(x)}\,dx\quad\forall h\in V^{(n)}_{Y,2},
$$
by virtue of Remark~\ref{rem-mugnic11}. Now for $g\in H^{-1}(T)$ it follows from Lemma~\ref{lemma1}.(1) that $P_n g\in L^2(0,1)$, hence $(P_n(P_ng))'=P_n g$.
Integrating by parts  we obtain that in particular for all $h\in V^{(n)}_{\{0\}^2,2}$ 
\begin{eqnarray*}
\int_0^1 (P_n g)(x)\overline{(P_n h)(x)}\,dx&=&\int_0^1 (P_n (P_n g))'(x)\overline{(P_n h)(x)}\,dx\\&=&
-\int_0^1 (P_n (P_n g))(x)\overline{{(P_n h)'}(x)}\,dx\\
&=&-\int_0^1 (P_n (P_n g))(x)\overline{{h}(x)}\,dx.
\end{eqnarray*}
 This shows that
\begin{equation*}
(f|h)_{L^2}=-(P_n (P_n g)|h)_{L^2}\quad\forall h\in V^{(n)}_{\{0\}^2,2}
\end{equation*}
because of~\eqref{Pn02}. 
Let us now denote by $\Pi$ the orthogonal projection of $L^2(0,1)$ onto the closed subspace of polynomials of one variable  spanned by $1$ and $(1-{\rm id})^n$. Then we have
$$
f=-(I-\Pi)P_n (P_n g)+ \Pi f=-P_n (P_n g)+\Pi (P_n(P_n g)+f).
$$
Accordingly, in either case $f\in H^1(0,1)$ in view of Lemma~\ref{lemma1}.(2). Moreover,
$$
f''=-g+\left(\Pi P_n (P_n g)\right)'' \hbox{ in } {\mathcal D}'(0,1).
$$
As 
$$\Pi \left(P_n (P_n g)+f\right)(x)=\alpha+\beta (1-x)^n,
$$
for some $\alpha, \beta\in\mathbb R$, we have found
that
$$
f''=-g+n(n-1) \beta x^{n-2} \hbox{ in } {\mathcal D}'(0,1).
$$
By~\cite[Lemma ~2.4]{MugNic11} this yields
that 
$$A_Y f={\rm Id}_m^{-1}(-f''+n(n-1) \beta (1-x)^{n-2}).
$$
Clearly $\beta$ depends on $f$ and its dependence will be given below.

Let us finally check that the additional conditions in the definition of $D(A_Y)$ hold. For all $h\in V^{(n)}_{Y,p}$
\begin{eqnarray*}
(f|h)_{L^2}&=&a(f,h)\\
&\stackrel{!}=&(A_Y f|h)_{H_Y}\\
&=&\left({\rm Id}_m^{-1}(-f''+\gamma(f) (1-id)^{n-2})|\ h\right)_{H^{-1}(T)}\\
&=&\left({\rm Id}_m^{-1}(k'')|  h\right)_{H^{-1}(T)}
\end{eqnarray*}
where 
\[
-k(x):=f(x)-\frac{1}{n(n-1)} \gamma(f) (1-x)^n)),\quad x\in (0,1),
\]
for some $\gamma(f)\in \mathbb C$ to be determined below.
Hence in view of Theorem~\ref{ibp}, and since $\mu_0(h)=0$, we find that
\begin{eqnarray*}
(f|h)_{L^2}&=&\left({\rm Id}_m^{-1}(k'')| h\right)_{H^{-1}(T)}\\
&=&-(k|h)_{L^2}+\left(\begin{pmatrix}
n k(0)- n (n-1)\mu_{n-2}(k)\\
(1-n) k(0)-k(1)+n (n-1)\mu_{n-2}(k)\\  
\end{pmatrix}\Big|\begin{pmatrix}
 \mu_1(h)\\ \mu_n (h)
\end{pmatrix}\right)_{\mathbb C^2}
\end{eqnarray*}
Observe that 
\[
-(k|h)_{L^2}=(f|h)_{L^2}-\frac{1}{n(n-1)}\gamma(f)\overline{\mu_{n}(h)}
\]
and furthermore due to Lemma~\ref{lemma:surj}
\begin{equation}\label{systeme0}
0=- k(0)+  (n-1)\mu_{n-2}(k)
\end{equation}
in case $Y=\{0\}^2$ (and hence $\mu_n(h)=0$); or else
\begin{equation}\label{systeme1}
\left\{
\begin{array}{rcl}
 nk(0)&=& n(n-1)\mu_{n-2}(k)\\
\frac{1}{n(n-1)}\gamma(f)&=&(1-n) k(0)-k(1)+n (n-1)\mu_{n-2}(k)
\end{array}
\right.
\end{equation}
if $Y=\{0\}\times \mathbb C$.\\
Hence,~\eqref{systeme0} can be re-written as
 \[
f(0)-\frac{1}{n(n-1)}\gamma(f)- (n-1)\mu_{n-2}(f)+\frac{1}{n}\gamma(f)\mu_{n-2}((1-id)^n)=0
 \]
 or rather
 \[
f(0)- (n-1)\mu_{n-2}(f)-\frac{1}{n(n-1)}\gamma(f)+\frac{1}{n(2n-1)}\gamma(f)=0.
 \] 
Finally, we find
 \begin{equation}\label{Y0final}
f(0)- (n-1)\mu_{n-2}(f)-\frac{1}{(n-1)(2n-1)}\gamma(f)=0.
  \end{equation}
If $Y=\{0\}\times \mathbb C$, \eqref{systeme1} can be re-written as
\begin{equation*}
\left\{
\begin{array}{rcl}
0&=&- k(0)+  (n-1)\mu_{n-2}(k)\\
n(n-1)\gamma(f)&=&(1-n) k(0)-k(1)+n k(0),
\end{array}
\right.
\end{equation*}
i.e., \eqref{Y0final} is satisfied and moreover
\[
\frac{1}{n(n-1)}\gamma(f)\stackrel{!}{=} k(0)-k(1)=-f(0)+\frac{1}{n(n-1)}\gamma (f)+f(1).
\]
Summing up, we see that necessarily
\[
\gamma(f)=(n-1)(2n-1)f(0)-(n-1)^2(2n-1)\mu_{n-2}(f)\qquad \hbox{if }Y=\{0\}^2,\\
\]
or else
\[
\begin{array}{ll}
\left\{
\begin{array}{rcl}
\gamma(f)&=&(n-1)(2n-1)f(0)-(n-1)^2(2n-1)\mu_{n-2}(f) \\
f(0)&=&f(1)
\end{array}
\right.
\qquad &\hbox{if }Y=\{0\}\times \mathbb C.
\end{array}
\]
The converse inclusion can be proven likewise, exploiting our integration-by parts-type formula as in the first part of the proof.
\end{proof}

\begin{rem}
Let us emphasize that taking into account~\cite[Thm.\ 3.1 and 3.3]{MugNic11} and Theorem~\ref{identK} one sees that the domains of $A_Y$ coincide for all $n\in \mathbb N$, in the case of $Y=\{0\}\times \mathbb C$. Comparing the special cases of $n=1$ and $n=2$ one sees that $A_Y u={\rm Id}_m^{-1}(-u'')$ for $n=1$ whereas we have just proved that $A_Y u={\rm Id}_m^{-1}(-u''+3u(0))$ if $n=2$. In general, ${\rm Id}_m^{-1} \left(\gamma(u)(1-{\rm id})^{n-2}\right)$ can be regarded as some sort of potential. In the linear case, this potential can be easily dealt with and, if desired, switched off. We will see in Section~\ref{nonlinearcase} that things are different in the nonlinear case.
\end{rem}

\begin{cor}\label{cor:mu0}
Let $n\in \mathbb N$ and define the bounded linear functional $\gamma:H^1(0,1)\to \mathbb C$ as in~\eqref{serge30/08b}. Let $\eta\in \mathbb C$. Then the operator  given by 
\[
u\mapsto {\rm Id}_m^{-1}(u'')+\eta {\rm Id}_m^{-1} \left(\gamma(u)(1-{\rm id})^{n-2}\right).
\]
either with domain
\begin{eqnarray*}
&&\{u\in H^1(0,1):\mu_0(u)=\mu_n(u)=0\}\qquad \hbox{or}\\
&&\{u\in H^1(0,1):\mu_0(u)=0,\; u(0)=u(1)\}
\end{eqnarray*}
generates on $\{f\in H^{-1}(T):\mu_0(f)=0\}$ an analytic, uniformly exponentially stable semigroup of angle $\frac{\pi}{2}$  that is immediately of trace class.
\end{cor}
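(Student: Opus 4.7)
The plan is to recognize the operator in question as a bounded, rank-one perturbation of the operator $-A_Y$ identified in Theorem~\ref{identK}, and then to invoke standard perturbation theory for generators of analytic semigroups, starting from the corollary immediately preceding the present one.

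First I would write $C_\eta = -A_Y + (\eta+1)B$, where $C_\eta$ denotes the operator in the statement and $Bu := {\rm Id}_m^{-1}(\gamma(u)(1-{\rm id})^{n-2})$; this matches at $\eta=-1$, giving $C_{-1} = -A_Y$, and isolates the $\eta$-dependence in a single rank-one term. For $n=1$, $\gamma\equiv 0$ by~\eqref{serge30/08b}, so $B=0$ and the statement reduces to the preceding corollary. For $n\ge 2$ the image of $B$ lies in the one-dimensional subspace of $H_Y$ spanned by the fixed vector ${\rm Id}_m^{-1}((1-{\rm id})^{n-2})$, because $\gamma\colon H^1(0,1)\to\mathbb C$ is a scalar linear functional. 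Since $D(A_Y)\subset H^1(0,1)$ and $\gamma$ is continuous on $H^1(0,1)$, $B$ is a bounded operator from $(D(A_Y),\|\cdot\|_{D(A_Y)})$ into $H_Y$; being of finite rank, it is $A_Y$-compact, and in particular $A_Y$-bounded with $A_Y$-bound zero.

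Next I would apply the classical perturbation theorem for generators of analytic semigroups: a perturbation of $-A_Y$ with relative bound zero still generates an analytic semigroup and preserves the angle $\pi/2$ already established in the preceding corollary. The resolvent of $C_\eta$ remains compact---and indeed of trace class---since trace-class resolvents are stable under bounded perturbations; combined with analyticity this yields that the semigroup is immediately of trace class.

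The hard part will be the uniform exponential stability: a rank-one perturbation of a negative self-adjoint operator can in principle shift eigenvalues into the right half-plane, especially for $|\eta+1|$ large. Since $C_\eta$ has compact resolvent, its spectrum is purely discrete; and because the semigroup is immediately compact, growth and spectral bounds coincide, so it suffices to show that $\sup\{\Real\lambda:\lambda\in\sigma(C_\eta)\}<0$. I would attack the eigenvalue problem $C_\eta u=\lambda u$ by a two-case split: if $\gamma(u)=0$, then $u$ is an eigenvector of $-A_Y$, hence $\lambda\in\sigma(-A_Y)\subset(-\infty,-\omega]$; if $\gamma(u)\ne 0$, one rewrites $u=-(\eta+1)\gamma(u)(-A_Y-\lambda)^{-1}w$ with $w:={\rm Id}_m^{-1}((1-{\rm id})^{n-2})$ and applies $\gamma$ to both sides to obtain a scalar characteristic equation for $\lambda$, which, exploiting the explicit form of $\gamma$ in~\eqref{serge30/08b} together with the moment and boundary conditions defining $D(A_Y)$, should admit solutions only in the open left half-plane. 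This spectral reduction is the main technical obstacle I foresee.
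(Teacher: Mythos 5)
Your overall strategy coincides with the paper's: there, too, the operator is viewed as $-A_Y$ from Theorem~\ref{identK} plus the finite-rank (hence $A_Y$-compact) term $u\mapsto \gamma(u)\,{\rm Id}_m^{-1}\left((1-{\rm id})^{n-2}\right)$, and the conclusion is drawn from the standard perturbation theorem for generators of analytic semigroups (the paper cites \cite[Thm.~3.7.25]{AreBatHie01}). Your handling of the decomposition, of the case $n=1$, of generation and of the angle $\frac{\pi}{2}$, as well as the trace-class argument via a trace-class resolvent and analyticity, is correct and in fact more explicit than the paper's one-line proof.

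The genuine gap is the uniform exponential stability. You correctly point out that relative compactness does not prevent eigenvalues from moving into the right half-plane when $|\eta+1|$ is large, and you reduce the question (compact resolvent, spectral bound equals growth bound for the immediately compact semigroup) to a scalar characteristic equation of the form $(\eta+1)\,\gamma\left((\lambda+A_Y)^{-1}w\right)=1$ with $w={\rm Id}_m^{-1}\left((1-{\rm id})^{n-2}\right)$ -- but at that point you stop, asserting only that its roots ``should'' lie in the open left half-plane and naming this the main obstacle. As written, the stability claim of the corollary is therefore not proved, and the missing step is not a formality: since $\eta+1$ ranges over all of $\mathbb C$, the roots remain confined to the left half-plane for \emph{every} $\eta$ only if the analytic function $\lambda\mapsto\gamma\left((\lambda+A_Y)^{-1}w\right)$ vanishes identically on the closed right half-plane, and nothing in your argument (nor any computation with the explicit form~\eqref{serge30/08b} and the moment conditions defining $D(A_Y)$) establishes such a cancellation. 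For comparison, the paper does not carry out this spectral reduction at all; it deduces the whole assertion, stability included, directly from the relatively compact perturbation observation, so completing your outline would actually require an argument beyond what the paper records.
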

\begin{proof}
The assertion follows directly from the observation that
\[
u\mapsto {\rm Id}_m^{-1} \left(\gamma(u)(1-{\rm id})^{n-2}\right)
\]
is a relatively compact perturbation of $A_Y$, and from well known perturbation results, cf.~\cite[Thm.~3.7.25]{AreBatHie01}.
\end{proof}
The following generalizes the well-posedness result~\cite[Thm.~3.7]{MugNic11}.
\begin{theo}
\label{wellp1}
Let $n\in \mathbb N$ and $\eta\in \mathbb C$. The heat-type equation 
$$\frac{\partial u}{\partial t}(t,x)=\frac{\partial^2 u}{\partial x^2}(t,x)+\eta \gamma(u) (1-x)^{n-2},\qquad t> 0,\; x\in (0,1),$$
(where $\gamma(u)$ is the same term defined in~\eqref{serge30/08b}) with moment conditions
$$\mu_0(u(t))=\mu_n(u(t))=0,\qquad t> 0,$$
or
$$\mu_0(u(t))=0,\; u(t,0)=u(t,1),\qquad t> 0,$$
and initial condition
$$u(0,\cdot)=u_0\in \{f\in H^{-1}(T):\mu_0(f)=0\}$$
is well-posed.
\end{theo}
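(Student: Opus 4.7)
The plan is to recast the stated PDE as an abstract linear Cauchy problem on the Hilbert space $H:=\{f\in H^{-1}(T):\mu_0(f)=0\}$, and then to read off well-posedness directly from Corollary~\ref{cor:mu0}. Writing $u(t):=u(t,\cdot)$ and denoting by $G$ the operator
\[
Gu:={\rm Id}_m^{-1}(u'')+\eta\,{\rm Id}_m^{-1}\!\bigl(\gamma(u)(1-{\rm id})^{n-2}\bigr),
\]
the first step is to observe that, for each of the two sets of side conditions imposed in the theorem, the corresponding function space coincides exactly with one of the domains exhibited in Corollary~\ref{cor:mu0}: the set $\{u\in H^1(0,1):\mu_0(u)=\mu_n(u)=0\}$ for the first prescription, and $\{u\in H^1(0,1):\mu_0(u)=0,\ u(0)=u(1)\}$ for the second. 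Since in both cases the domain sits inside $H^1(0,1)$ and satisfies $\mu_0(u)=0$, the functional $\gamma$ from~\eqref{serge30/08b} is well defined, and the added zeroth-order term $\eta\,\gamma(u)(1-{\rm id})^{n-2}$ lies in $L^2(0,1)\subset H$.

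Next I would invoke Corollary~\ref{cor:mu0} to conclude that $G$, with either of these domains, generates an analytic semigroup $(e^{tG})_{t\ge 0}$ of angle $\pi/2$ on $H$, which is uniformly exponentially stable and immediately of trace class. Applying the standard semigroup theory (see e.g.\ \cite[Chapt.~3]{AreBatHie01}), the abstract Cauchy problem
\[
u'(t)=Gu(t),\qquad t>0,\qquad u(0)=u_0,
\]
is well-posed for every $u_0\in H$: the unique mild solution $u(t)=e^{tG}u_0$ belongs to $C([0,\infty);H)\cap C^\infty((0,\infty);D(G^k))$ for every $k\in\mathbb N$ by analyticity, so that for $t>0$ the moment conditions encoded in $D(G)$ are automatically satisfied.

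The only point that still requires verification is that a solution of the abstract equation really is a solution of the PDE in the distributional sense on $(0,1)$. For this I would use Remark~\ref{munmun1}(5) (and the non-injective identification $\mathrm{Id}$ introduced in Remark~\ref{idmdef}) to rewrite
\[
\mathrm{Id}\bigl(Gu(t)\bigr)=u''(t)+\eta\,\gamma(u(t))(1-x)^{n-2}\qquad\text{in }\mathcal{D}'(0,1),
\]
which, combined with the fact that $\mathrm{Id}_m$ is an isomorphism on $H$, transfers the abstract equation into the pointwise distributional PDE. I expect no genuine obstacle here: the analytic content (coercivity of the form, perturbation by a relatively compact operator, generation of an analytic semigroup) has already been absorbed into Corollary~\ref{cor:mu0}, and the remaining work is merely the bookkeeping needed to match the two formulations.
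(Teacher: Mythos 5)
Your overall strategy is the same as the paper's (reduce to the abstract Cauchy problem for the generator of Corollary~\ref{cor:mu0} and use analytic smoothing), but the step you dismiss as ``merely bookkeeping'' is exactly where the paper has to do real work, and your treatment of it has a gap. To apply Remark~\ref{munmun1}(5), i.e.\ formula~\eqref{ajoutserge10}, to $u(t)$ you need $u(t)\in W^{2,2}(0,1)$; knowing $u(t)\in D(G^k)$ for all $k$ by analyticity is an abstract statement and does \emph{not} by itself yield any Sobolev regularity. One must prove that $D(A_Y^2)$ embeds into $H^2(0,1)$ (the paper's Lemma~\ref{DA2} shows $D(A_Y^2)\subset H^3(0,1)$, via a small elliptic bootstrap), and -- equally important -- that $\mu_0\bigl(u''-\gamma(u)(1-{\rm id})^{n-2}\bigr)=0$ on $D(A_Y^2)$, which is what kills the hidden $\delta_1$-term in~\eqref{ajoutserge10} and lets one drop ${\rm Id}_m^{-1}$ as an identity \emph{in} $H_Y$, so that the abstract equation literally becomes the stated heat-type equation with $\partial^2 u/\partial x^2$ a genuine function. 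Neither point is automatic, and without them your appeal to Remark~\ref{munmun1}(5) is not justified.

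Your fallback -- reading the equation only in ${\mathcal D}'(0,1)$ after applying the non-injective identification ${\rm Id}$ -- does hold already for $u(t)\in D(G)$ (indeed ${\rm Id}\circ{\rm Id}_m^{-1}$ is the identity on $H^{-1}(0,1)$, no regularity needed), but it proves a weaker statement than the theorem as the paper intends it: the distributional picture on the open interval is blind to boundary-supported terms, which is precisely the subtlety the ${\rm Id}_m$-formalism is designed to track, and the equation in Theorem~\ref{wellp1} is written with $u''(t,\cdot)$ as a function, which requires the $H^2$ (in fact $H^3$) regularity furnished by Lemma~\ref{DA2}. A further minor slip: $\eta\gamma(u)(1-{\rm id})^{n-2}$ does \emph{not} lie in $H=\{f\in H^{-1}(T):\mu_0(f)=0\}$, since $(1-{\rm id})^{n-2}$ has nonzero mean; it is only its image under ${\rm Id}_m^{-1}$ that lies in $H$, which is why that operator appears in Corollary~\ref{cor:mu0} in the first place. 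So: same skeleton as the paper, but you must add the regularity and zero-mean identification of Lemma~\ref{DA2} (together with Lemma~\ref{noidm}) to close the argument.
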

We remark  explicitly that letting $\eta=0$ we recover well-posedness of the standard heat equation with the above conditions on the moments.

In the proof of this theorem we will need the following two results.

\begin{lemma}\label{noidm}
Let $p\in (1,\infty)$. If $f\in W^{2,p}(0,1)$, then 
\[
P_n({\rm Id}_m^{-1}(f''+\gamma(f)  (1-x)^{n-2}))=P_n (f''+\gamma (f) (1-x)^{n-2})\qquad \hbox{ in }L^2(0,1).
\]
\end{lemma}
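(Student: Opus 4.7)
The plan is to reduce the statement directly to Remark~\ref{munmun1}.(5), which is essentially the identity we need applied with a generic $L^p$-function as the ``perturbation'' $\psi$. That remark states that for $f\in W^{2,p}(0,1)$ and any $\psi\in L^p(0,1)$,
\[
{\rm Id}_m^{-1}(f''+\psi)=f''+\psi +\mu_0(f''+\psi)\delta_1 \quad\hbox{ in } W^{-1,p}(T),
\]
and that, after applying $P_n$, the Dirac term at $1$ drops out by Remark~\ref{munmun1}.(3), leaving
\[
P_n({\rm Id}_m^{-1}(f''+\psi))=P_n(f''+\psi) \quad\hbox{ in } L^p(0,1).
\]
So the whole task reduces to choosing $\psi(x):=\gamma(f)(1-x)^{n-2}$ and checking that this $\psi$ lies in $L^p(0,1)$.

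First I would dispose of the trivial case $n=1$: here $\gamma(f)=0$ by~\eqref{serge30/08b}, so the ``potential'' term $\gamma(f)(1-x)^{n-2}$ is identically zero and the claim collapses to $P_1({\rm Id}_m^{-1}(f''))=P_1(f'')$, which is Remark~\ref{munmun1}.(5) with $\psi=0$. (This is important because $(1-x)^{-1}\notin L^p(0,1)$ would otherwise be a problem, but the factor $\gamma(f)=0$ makes the product well-defined and null.)

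For $n\ge 2$, the function $x\mapsto (1-x)^{n-2}$ is a polynomial on $[0,1]$, hence bounded and a fortiori in $L^p(0,1)$ for every $p\in (1,\infty)$. The linear functional $\gamma$ from~\eqref{serge30/08b} returns a fixed complex number $\gamma(f)\in\mathbb{C}$ for the given $f$, so $\psi:=\gamma(f)(1-\cdot)^{n-2}\in L^p(0,1)$. Thus Remark~\ref{munmun1}.(5) applies to this $\psi$ and yields the claimed identity at once.

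I do not foresee any real obstacle here: the only point of vigilance is the degenerate case $n=1$, where the literal expression $(1-x)^{n-2}$ is not locally integrable near $x=1$; but the vanishing of $\gamma$ in that case annihilates the term before integrability becomes an issue. Everything else is a direct invocation of the already established identity~\eqref{ajoutserge10} combined with $P_n\delta_1=0$.
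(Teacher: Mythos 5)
Your argument is exactly the paper's: the lemma is proved there by citing the generalized identity of Remark~\ref{munmun1}.(5) (equivalently \cite[Rem.~2.6]{MugNic11}) together with $P_n\delta_1=0$ from Remark~\ref{munmun1}.(3), which is precisely your invocation with $\psi=\gamma(f)(1-\cdot)^{n-2}\in L^p(0,1)$. Your explicit treatment of the degenerate case $n=1$, where $\gamma(f)=0$ neutralizes the non-integrable factor $(1-x)^{-1}$, is a correct and welcome extra check, but the route is the same.
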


\begin{proof}
The claim follows from~\cite[Rem.~2.6]{MugNic11} and Remark~\ref{munmun1}.(3).
\end{proof}

\begin{lemma}\label{DA2}
Let $n\ge 2$. Then the following assertions hold.
\begin{enumerate}[(1)]
\item If $Y=\{0\}^2$, then one has
\begin{equation}
D(A_Y^2)=\{u\in H^3(0,1): \mu_0(u)=\mu_0(u''-\gamma(u)  (1-{\rm id})^{n-2})=\mu_n(u)=\mu_n(u''-\gamma(u)  (1-{\rm id})^{n-2})=0\}
\label{serge19/09a}.
\end{equation}
\item If $Y=\{0\}\times \mathbb C$, then one has
\begin{equation}
D(A_Y^2)=\{u\in H^3(0,1): \mu_0(u)=\mu_0(u''-\gamma(u)  (1-{\rm id})^{n-2})=0,\; u(0)=u(1),\; u''(0)-\gamma(u)=u''(1)\}
\label{serge19/09ab}.
\end{equation}
\end{enumerate}
In either case,
$$
A_Y u=-u''+\gamma(u)  (1-{\rm id})^{n-2}, \quad\forall u\in D(A_Y^2).
$$
\end{lemma}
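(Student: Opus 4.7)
I would use the characterization $u\in D(A_Y^2)$ if and only if $u\in D(A_Y)$ and $A_Y u\in D(A_Y)$, combined with the formula $A_Y u=\mathrm{Id}_m^{-1}(-u''+\gamma(u)(1-x)^{n-2})$ from Theorem~\ref{identK} and the decomposition of Remark~\ref{munmun1}.(5). The first step is to show that any $u\in D(A_Y^2)$ belongs to $H^3(0,1)$. Since $\mathrm{Id}\circ\mathrm{Id}_m^{-1}$ is the identity on $H^{-1}(0,1)$, applying $\mathrm{Id}$ to the above formula yields $\mathrm{Id}(A_Y u)=-u''+\gamma(u)(1-x)^{n-2}$ in $H^{-1}(0,1)$; but $A_Y u\in D(A_Y)\subset H^1(0,1)\subset L^2(0,1)$, so $\mathrm{Id}(A_Y u)$ is simply $A_Y u$ regarded as a function. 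Rearranging, $u''=-A_Y u+\gamma(u)(1-x)^{n-2}\in H^1(0,1)$, whence $u\in H^3(0,1)$ (since $(1-x)^{n-2}$ is smooth for $n\ge 2$).

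Once this regularity is in hand, Remark~\ref{munmun1}.(5) applies and gives
\[
A_Y u=-u''+\gamma(u)(1-x)^{n-2}+\mu_0\!\left(-u''+\gamma(u)(1-x)^{n-2}\right)\delta_1\quad\hbox{in }H^{-1}(T).
\]
As $A_Y u$ is an $L^2$-function, the Dirac atom at $1$ must vanish, yielding the condition $\mu_0(u''-\gamma(u)(1-x)^{n-2})=0$ common to~\eqref{serge19/09a} and~\eqref{serge19/09ab}. After eliminating the atom, $A_Y u$ coincides, as an $H^1$-function on $(0,1)$, with $-u''+\gamma(u)(1-x)^{n-2}$, which is the action formula claimed in the statement.

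The remaining constraints come from imposing $A_Y u\in D(A_Y)$ on this explicit $H^1$-function, following Theorem~\ref{identK}: if $Y=\{0\}^2$, the condition $\mu_n(A_Y u)=0$ reads $\mu_n(u''-\gamma(u)(1-x)^{n-2})=0$, producing~\eqref{serge19/09a}; if $Y=\{0\}\times\mathbb C$, the condition $(A_Y u)(0)=(A_Y u)(1)$ becomes $-u''(0)+\gamma(u)=-u''(1)$, i.e., $u''(0)-\gamma(u)=u''(1)$, producing~\eqref{serge19/09ab}. The converse inclusion is obtained by running the same chain backwards: for $u\in H^3(0,1)$ satisfying the listed conditions, Remark~\ref{munmun1}.(5) together with the $\mu_0$-condition identify $A_Y u$ with the $H^1$-function $-u''+\gamma(u)(1-x)^{n-2}$, whose remaining listed condition is exactly what places it in $D(A_Y)$.

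The main obstacle is keeping track, throughout the argument, of the two faces of $A_Y u$: its intrinsic description as an element of $H^{-1}(T)$, which a priori carries a hidden Dirac atom at $1$ via $\mathrm{Id}_m^{-1}$, versus its $L^2$-/$H^1$-function representative on $(0,1)$. The condition $\mu_0(u''-\gamma(u)(1-x)^{n-2})=0$ is precisely what forces these two pictures to agree, and only after this reduction are the remaining $D(A_Y)$-constraints transparently transferable onto $u$.
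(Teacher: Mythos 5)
Your proposal is correct and follows essentially the same route as the paper: first deduce $u\in H^3(0,1)$ by identifying $A_Yu$ with $-u''+\gamma(u)(1-{\rm id})^{n-2}$ as distributions on $(0,1)$ (the paper does this by testing against ${\mathcal D}(0,1)$, which is exactly your ${\rm Id}\circ{\rm Id}_m^{-1}$ observation), then use the Dirac-atom decomposition of ${\rm Id}_m^{-1}$ (Remark~\ref{munmun1}.(5), the paper cites the equivalent \cite[Lemma~2.4]{MugNic11}) to force $\mu_0(u''-\gamma(u)(1-{\rm id})^{n-2})=0$ and identify the action, and finally transfer the remaining $D(A_Y)$-constraints, with the converse inclusion checked directly. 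No gaps.
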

\begin{proof}
In either cases, the inclusion ``$\supset$'' holds because for $u$ in the right-hand side of \eqref{serge19/09a},
$-u''+\gamma(u)  (1-{\rm id})^{n-2}$ clearly belongs to $D(A_Y)$ and 
$A_Y u=-u''+\gamma(u)  (1-{\rm id})^{n-2}$, which also belongs to $D(A_Y)$.

We only prove that ``$\subset$''  holds in (1), the corresponding proof in (2) being analogous. Let us take $u\in D(A_Y^2)$. Then $u\in H^1(0,1)$ and
\[
A_Y u={\rm Id}^{-1}_m \left(-u''+\gamma(u) (1-{\rm id})^{n-2}\right)\in H^1(0,1).
\]
We first prove that $u''\in H^1(0,1)$, which clearly implies that $u\in H^3(0,1)$.
Now, set for shortness $f:=-u''+\gamma(u) (1-{\rm id})^{n-2}$ that clearly belongs to $H^{-1}(0,1)$
and consequently 
\[
\langle f,v,\rangle_{H^{-1}(0,1)-H^1_0(0,1)} 
= \langle {\rm Id}_m^{-1} f,v\rangle_{H-H^1(T)}
= \langle A_Y u,v\rangle_{H-H^1(T)}\qquad \forall v \in {\mathcal D}(0,1).
\]

Now, because by assumption $A_Y u\in H^1(0,1)$ we deduce that in fact 
\[\langle f,v,\rangle_{H^{-1}(0,1)-H^1_0(0,1)}= ( A_Y u|v)_{L^2},\]
hence $u''=-A_Y u+\gamma(u) (1-{\rm id})^{n-2}\in H^1(0,1)$ and we conclude that $u\in H^3(0,1)$, as we wanted to prove.

But now as $f$ belongs to $H^{1}(0,1)$,  by~\cite[Lemma 2.4]{MugNic11}
\[
{\rm Id}^{-1}_m f=f-\mu_0(f) \delta_1.
\]
Because both $f$ and ${\rm Id}_m^{-1} f$ and hence also $\mu_0(f)\delta_1$ belong to $H^1(0,1)$,  we have that necessarily $\mu_0(f)=0$
and therefore by Theorem~\ref{identK} and Lemma~\ref{noidm},
\[
A_Y u=-u''+\gamma(u) (1-{\rm id})^{n-2}.
\]
Accordingly, because $u\in D(A_Y^2)$ and hence $\mu_n(A_Y u)=0$, it follows that $\mu_n(u''-\gamma(u) (1-{\rm id})^{n-2})=0$. This completes the proof.
\end{proof}

\begin{proof}[Proof of Theorem~\ref{wellp1}]
We have seen that $-A_Y$ generates an analytic semigroup, hence well-posedness of the corresponding parabolic problem follows. By standard  analytic semigroup theory each initial data in $H_Y$ is immediately mapped by the semigroup into $D(A_Y^2)$. Hence, by Lemma~\ref{noidm} the claim will follow if we show that $D(A_Y^2)\subset H^2(0,1)$. But this is just one of the claims of Lemma~\ref{DA2}.
\end{proof}

\begin{rem}
Throughout this section we could have considered some perturbations of the quadratic form $a$, as we have done in~\cite[\S~3 and \S~4]{MugNic11}. In particular, for any $2\times 2$-matrix $K$ the additional term
\[
b(u,v):=\left( K \begin{pmatrix}
\mu_0(u)\\ \mu_n(u)
\end{pmatrix}\Big|  \begin{pmatrix}
\mu_0(v)\\ \mu_n(v)
\end{pmatrix} \right),\quad \forall u,v\in H^1(0,1),
\]
may be studied. For the sake of brevity, we avoid to discuss this issue thoroughly: It suffices to observe that for all $n\in \mathbb N$ there exists $C_n>0$ such that
\begin{equation}
\label{munweakcontinuous}
|\mu_n(g)|^2\leq C_n \|g\|_{L^2} \|g\|_{H^{-1}(T)},\qquad \forall g\in 
L^2(0,1):
\end{equation}
this can be shown following the proof of~\cite[Lemma~2.12]{MugNic11} and taking into account Remark~\ref{munmun1}.(1).
Accordingly, the sesquilinear form $a+b$ fits the framework of~\cite{Cro04}, hence (minus) the operator associated with this form in $H_Y$ generates a cosine operator function and an analytic semigroup of angle $\frac{\pi}{2}$.
\end{rem}

\subsection{The general case} We now complete our discussion of the linear heat equation by considering the remaining cases.

\begin{theo}\label{identK2}
Let $n\in \mathbb N$ and $Y$ be a subspace of $\mathbb C^2$, $Y\neq \{0\}^2$ and  $Y\neq \{0\}\times \mathbb C$. Then the operator $(A_Y,D(A_Y))$ associated in $H_Y$ with the quadratic form $a$  is given by
\begin{eqnarray*}
D(A_Y)&=&\left\{u\in H^1(0,1):\begin{pmatrix}
\mu_0(u)\\ \mu_n(u)
\end{pmatrix}\in Y\right\}\\
A_{Y}u&=&{\rm Id}_m^{-1}(-u''+\gamma(u) (1-{\rm id})^{n-2})-c(u)\delta_1,
\end{eqnarray*}
where $\gamma(u)$ is defined as in~\eqref{serge30/08b} and $c(u)\in \mathbb C$ is uniquely determined by the condition
\begin{equation}
\label{cuniqu}
\begin{pmatrix}
c(u)+u(1) \\
u(0)-u(1)
\end{pmatrix}\in Y^\perp
\end{equation}
\end{theo}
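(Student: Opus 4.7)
The plan is to adapt the three-step strategy used in Theorem~\ref{identK}, with the key new feature that $A_Y u$ may now carry a nonzero $\delta_1$-contribution in $H^{-1}(T)$, coming from the fact that $\mu_0(A_Y u)$ need no longer vanish. First I would take $u\in D(A_Y)$ and set $g:=A_Y u\in H^{-1}(T)$. Since $V^{(n)}_{\{0\}^2,2}\subset V^{(n)}_{Y,2}$, I can plug $h\in V^{(n)}_{\{0\}^2,2}$ into the identity $a(u,h)=(g|h)_{H^{-1}(T)}$; the $\mu_0$-term in the equivalent inner product from Remark~\ref{rem-mugnic11} vanishes, leaving $(u|h)_{L^2}=(P_n g|P_n h)_{L^2}$. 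The integration-by-parts argument of Theorem~\ref{identK} then applies verbatim and would deliver $u\in H^1(0,1)$ together with a representation $u=-P_n(P_n g)+\alpha+\beta(1-{\rm id})^n$, from which ${\rm Id}(g)=-u''+n(n-1)\beta(1-{\rm id})^{n-2}$ in $H^{-1}(0,1)$. Lifting back to $H^{-1}(T)$ via the splitting $g={\rm Id}_m^{-1}({\rm Id}(g))+\mu_0(g)\delta_1$ (valid because $\mu_0(\delta_1)=1$ and $\delta_1\in\ker{\rm Id}$) would already produce the claimed form of $A_Y u$ with $\gamma(u):=n(n-1)\beta$ and $c(u):=-\mu_0(g)$; what remains is to identify $\gamma(u)$ with~\eqref{serge30/08b} and to verify~\eqref{cuniqu}.

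For this identification I would test against arbitrary $h\in V^{(n)}_{Y,2}$. Theorem~\ref{ibp} evaluates $({\rm Id}_m^{-1}(u'')|h)_{H^{-1}(T)}$. For the potential term I would use that the explicit function $v(x):=(1-(1-x)^n)/(n(n-1))$ satisfies $v''=-(1-{\rm id})^{n-2}$ and $(v|h)_{L^2}=(\overline{\mu_0(h)}-\overline{\mu_n(h)})/(n(n-1))$; applying Theorem~\ref{ibp} to $v$ would reduce $({\rm Id}_m^{-1}((1-{\rm id})^{n-2})|h)_{H^{-1}(T)}$ to an explicit linear combination of $\overline{\mu_1(h)}$ and $\overline{\mu_n(h)}$ with no residual $L^2$-pairing. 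Combined with $(\delta_1|h)_{H^{-1}(T)}=\overline{\mu_0(h)}$ (from Remark~\ref{munmun1}(3) and $\mu_0(\delta_1)=1$), equating $a(u,h)=(A_Y u|h)_{H^{-1}(T)}$ should yield a single relation linear in $\overline{\mu_0(h)},\overline{\mu_1(h)},\overline{\mu_n(h)}$. Since $\mu_1$ is unconstrained on $V^{(n)}_{Y,2}$ (by Lemma~\ref{lemma:surj}), the coefficient of $\overline{\mu_1(h)}$ must vanish, forcing $\gamma(u)$ to equal the expression in~\eqref{serge30/08b}; after this substitution the remaining coefficients simplify to $c(u)+u(1)$ (in front of $\overline{\mu_0(h)}$) and $u(0)-u(1)$ (in front of $\overline{\mu_n(h)}$), so that the residual identity is exactly~\eqref{cuniqu}. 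The uniqueness of $c(u)$ then follows from a short case analysis showing that under the hypothesis $Y\neq\{0\}^2,\{0\}\times\mathbb C$ either $Y^\perp=\{0\}$ or $Y^\perp$ is one-dimensional with nonzero second component.

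For the converse inclusion, given $u\in H^1(0,1)$ with $(\mu_0(u),\mu_n(u))^\top\in Y$, I would define $\gamma(u),c(u)$ through~\eqref{serge30/08b}--\eqref{cuniqu}, set $\tilde g:={\rm Id}_m^{-1}(-u''+\gamma(u)(1-{\rm id})^{n-2})-c(u)\delta_1\in H^{-1}(T)$, and run the preceding computation in reverse to verify $a(u,h)=(\tilde g|h)_{H^{-1}(T)}$ for every $h\in V^{(n)}_{Y,2}$. The hard part is the bookkeeping in the middle step: tracking all the boundary and moment terms produced by Theorem~\ref{ibp} and by the potential contribution $({\rm Id}_m^{-1}((1-{\rm id})^{n-2})|h)_{H^{-1}(T)}$, and verifying that the various $L^2$-pairings cancel cleanly enough to leave a relation purely in $\overline{\mu_0(h)},\overline{\mu_1(h)},\overline{\mu_n(h)}$.
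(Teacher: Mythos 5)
Your proposal is correct and takes essentially the same route as the paper's proof: restricting to $h\in V^{(n)}_{\{0\}^2,2}$ and projecting onto $\mathrm{span}\{1,(1-{\rm id})^n\}$ to get $u\in H^1(0,1)$ and the representation of $A_Yu$ through ${\rm Id}_m^{-1}$ and $\mu_0(A_Yu)\delta_1$, then testing against general $h\in V^{(n)}_{Y,2}$ with Theorem~\ref{ibp} and the surjectivity of $(\mu_0,\mu_1,\mu_n)$ from Lemma~\ref{lemma:surj} to kill the $\overline{\mu_1(h)}$-coefficient (which yields~\eqref{serge30/08b}) and to reduce the remainder to~\eqref{cuniqu}, with the converse inclusion obtained by reversing the computation. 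The only cosmetic differences are that you evaluate $\left({\rm Id}_m^{-1}((1-{\rm id})^{n-2})|h\right)_{H^{-1}(T)}$ via the explicit primitive $v=(1-(1-{\rm id})^n)/(n(n-1))$ and Theorem~\ref{ibp}, whereas the paper computes $(P_n(1-{\rm id})^{n-2}|P_nh)_{L^2}$ directly (identity~\eqref{serge14/11}, same value), and that you pin down $\gamma(u)$ already in the forward direction while the paper fixes it in the converse one.
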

\begin{proof}
Again, we only treat the case $n\ge 2$ and refer the reader to~\cite[Thm.~4.3]{MugNic11} for the case $n=1$.
We denote
$${\mathcal K}_1:=\left\{u\in H^1(0,1): \begin{pmatrix}
\mu_0(u)\\ \mu_n(u)
\end{pmatrix}\in Y
 \right\}$$
 and proceed in a way similar to that in the proof of Theorem~\ref{identK} in order to determine $A_{Y}$, which by definition is given by
\begin{eqnarray*}
D(A_{Y})&:=&\{f\in V^{(n)}_{Y,2}: \exists g\in H^{-1}(T): a(f,h)=(g|h)_{H^{-1}(T)}\; \forall h\in V^{(n)}_{Y,2}\},\\
A_{Y}f&:=&g.
\end{eqnarray*}
Let us first check the inclusion $D(A_{Y})\subset {\mathcal K}_1$.
Let $f\in D(A_{Y})$. Then $f\in V^{(n)}_{Y,2}$ and there exists $g\in H^{-1}(T)$ such that
\begin{equation}\label{sn1}
(f|h)_{L^2}=\int_0^1 (P_n g)(x)(P_n\bar h)(x)\,dx
+\mu_0(g) \mu_0(\bar {h})
\quad\forall h\in V^{(n)}_{Y,2}.
\end{equation}
Now, because $g\in H^{-1}(T)$, by Lemma~\ref{lemma1}, we can consider $P_ng$  that belongs to $L^2(0,1)$.
{ Therefore by integration by parts and taking into account Lemma~\ref{lemma1}.(1) we obtain that
\begin{eqnarray*}
\int_0^1 (P_n g)(x)(P_n\bar h)(x)\,dx&=&\int_0^1 (P_n(P_n g))'(x)(P_n\bar h)(x)\,dx\\&=&
-\int_0^1 (P_n(P_n g))(x)\overline{h}(x)\,dx+[P_n(P_ng) (P_n\bar h)]_0^1\quad\forall h\in V^{(n)}_{Y,2}.
\end{eqnarray*}
}
Now, observe that the scalar number
$$\mu_0( g) \mu_0(\overline{h})+[P_n(P_ng) (P_n\bar h)]_0^1\in \mathbb C$$ 
is a linear combination of $\mu_0(\overline{h})$ and $\mu_n(\overline{h})$, hence it can be written in the form
$$c_0 \mu_0(\overline{h})+c_n \mu_n(\overline{h})=\int_0^1 (c_0+c_1(1-x)^n) \overline{h(x)}dx,$$
for some $c_0,c_n\in \mathbb C$.
Letting $\rho(x):=c_0+c_1(1-x)^n$, we obtain
 that
$$
(f|h)_{L^2}=(-P_n(P_ng)+\rho|h)_{L^2}\quad\forall h\in V^{(n)}_{Y,2}.
$$
Therefore, denoting by $\Pi$ as in the proof of Theorem~\ref{identK} the orthogonal projection of $L^2(0,1)$ onto the vector space spanned by $1$ and $(1-{\rm id})^n$, we obtain (by restricting the previous identity to all $h\in {V^{(n)}_{\{0\}^2,2}\subset V^{(n)}_{Y,2}}$)
$$
(I-\Pi)(f+P_n(P_ng)-\rho)=0,
$$
or equivalently
\begin{equation}\label{sn2}
f=(I-\Pi)(-P_n(P_ng)+\rho)+\Pi f=-P_n(P_ng)+\Pi (P_n(P_ng)+f).
\end{equation}

This proves that $f$ belongs to $H^1(0,1)$ and (differentiating~\eqref{sn2} twice)
that 
\begin{equation}\label{sn2bis}
g=-f''+\gamma (1-{\rm id})^{n-2},
\end{equation}
 in the distributional sense (i.e. in ${\mathcal D}'(0,1)$) for some $\gamma\in \mathbb C$. Hence, by
\cite[Lemma 2.4]{MugNic11}, there exists $c(f)\in \mathbb C$ such that
\[
A_{Y}f=g={\rm Id}_m^{-1}(-f''+\gamma (1-{\rm id})^{n-2})-c\delta_1,
\]
and in fact $c(f)=-\mu_0(g)$.

It remains to check the condition \eqref{cuniqu}. 
But we first notice that, for all $f\in D(A_{Y})$,~\eqref{sn2} leads to
$$
f'=-P_ng-\frac{\gamma}{n-1} (1-{\rm id})^{n-1},
$$
for the same $\gamma$ as in \eqref{sn2bis}. By~\eqref{sn1} we obtain
\begin{eqnarray*}
\int_0^1f(x) \bar h(x)\,dx&=&\int_0^1 (-f'(x)-\frac{\gamma}{n-1} (1-x)^{n-1})(P_n\bar h)(x)\,dx-c(f) \mu_0(\overline{h})
\\
&=&-\int_0^1 f'(x)(P_n\bar h)(x)\,dx-\frac{\gamma}{n-1} \mu_{n-1}(P_n\bar h)-c(f) \mu_0(\overline{h})\quad\forall h\in V^{(n)}_{Y,2}.
\end{eqnarray*}
As $\mu_{n-1}(P_n\bar h)=0$ by Remark~\ref{munmun1}.(2), we deduce that
\begin{equation}\label{serge19/09b}
\int_0^1f(x) \overline{h}(x)\,dx=-\int_0^1 f'(x)(P_n\bar h)(x)\,dx-c(f) \mu_0(\overline{h})\quad\forall h\in V^{(n)}_{Y,2}.
\end{equation}
Integrating by parts the first term on the right-hand side we obtain
\begin{eqnarray*}
-\int_0^1 f'(x)(P_n\bar h)(x)\,dx&=&\int_0^1 f(x) \bar h(x)\,dx+f(0) (P_n\bar h)(0)-f(1) (P_n\bar h)(1)\\
&=&\int_0^1 f(x) \bar h(x)\,dx-f(0) \mu_n(\bar h)-f(1) (\mu_0(\overline{h})-\mu_n(\bar h)),
\end{eqnarray*}
owing to Lemma~\ref{lemma1}.(1) and~\eqref{Pn02}, respectively.
Plugging this identity in \eqref{serge19/09b} we find that
$$
-(f(1)+c(f)) \mu_0(\overline{h})
+(f(1)-f(0)) \mu_n(\bar h)=0\quad\forall h\in V^{(n)}_{Y,2}.
$$
 By the surjectivity result from Lemma~\ref{lemma:surj}, we have shown \eqref{cuniqu}: Let us notice that \eqref{cuniqu} determines in a unique way $c$.  

We now prove the converse inclusion.
Let then $f\in {\mathcal K}_1$. Then we can take 
\[
g:={\rm Id}_m^{-1}(-f''+\gamma (1-{\rm id})^{n-2})-c\delta_1,
\]
with $c\in \mathbb C$ fixed by the condition \eqref{cuniqu} (and is equal to $-\mu_0(g)$) and $\gamma$ will be fixed later on.
Hence by definition of the inner product in $H^{-1}(T)$ and the fact that $P_n\delta_1=0$ by Remark~\ref{munmun1}.(3), we will have for any $h\in V^{(n)}_{Y,2}$
\begin{eqnarray*}
(g|h)_{H^{-1}(T)}&=&(P_n({\rm Id}_m^{-1}(-f''+\gamma (1-{\rm id})^{n-2}))|P_n  h)_{L^2}-c\mu_0(\overline{h})\\
&=&-(P_n({\rm Id}_m^{-1}f''| P_n  h)_{L^2}    +\gamma (P_n(1-{\rm id})^{n-2})| P_n  h)_{L^2}-c\mu_0(\overline{h}).
\end{eqnarray*}
But simple calculations and an integration by parts yield
\begin{equation}\label{serge14/11}
(P_n(1-{\rm id})^{n-2})| P_n  h)_{L^2}=\frac{n}{(n-1)(2n-1)}(\mu_1(\bar h)-\mu_n(\bar h)).
\end{equation}
Hence by 
Theorem~\ref{ibp}, for all $h\in V^{(n)}_{Y,2}$  we get
\begin{equation}\label{serge20/09}
(g|h)_{H^{-1}(T)}=(f|h)_{L^2}+\begin{pmatrix}
-(f(1)+c)\\ -n f(0)+ n (n-1)\mu_{n-2}(f)+\frac{n\gamma }{(n-1)(2n-1)}\\
-f(0)+f(1)-n (n-1)\mu_{n-2}(f)+n f(0)-\frac{n\gamma }{(n-1)(2n-1)}\\  
\end{pmatrix}\begin{pmatrix}
\mu_0(h)\\
 \mu_1(h)\\ \mu_n (h)
\end{pmatrix}.
\end{equation}
From this identity we see that $f$ will belong to $D(A_Y)$ if the second entry of the second term in the right-hand side of this identity is zero.
This motivates us to choose $\gamma$ such that 
\[
-n f(0)+ n (n-1)\mu_{n-2}(f)+\frac{n\gamma }{(n-1)(2n-1)}=0,
\]
which is equivalent to ~\eqref{serge30/08b}. With this choice we see that \eqref{serge20/09} is equivalent to
\[
(g|h)_{H^{-1}(T)}=(f|h)_{L^2}+\begin{pmatrix}
-(f(1)+c)\\
-f(0)+f(1)\\  
\end{pmatrix}\begin{pmatrix}
\mu_0(h)\\
 \mu_n (h)
\end{pmatrix}=(f|h)_{L^2}, \forall h\in V^{(n)}_{Y,2},
\]
the second identity following from \eqref{cuniqu}.  
This shows that
$$
a(f,h)=(g|h)_{H^{-1}(T)} ,\qquad \forall h\in V^{(n)}_{Y,2},
$$
and proves that $f$ belongs to $D(A_{Y})$.
\end{proof}

\begin{cor}
Under the assumptions of Theorem \ref{identK2}, let $\eta\in \mathbb C$. Then the operator  $A_Y$ given by 
\begin{eqnarray*}
D(A_Y)&:=&\left\{u\in H^1(0,1):\begin{pmatrix}
\mu_0(u)\\ \mu_n(u)
\end{pmatrix}\in Y\right\},\\
A_Y u&:=& {\rm Id}_m^{-1}(u'')+\eta\gamma(u)  {\rm Id}_m^{-1} \left((1-{\rm id})^{n-2}\right),
\end{eqnarray*}
generates on $H_Y= H^{-1}(T)$ an analytic, uniformly exponentially stable semigroup of angle $\frac{\pi}{2}$  that is immediately of trace class.
\end{cor}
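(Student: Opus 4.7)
The plan is to emulate the proof of Corollary~\ref{cor:mu0} by recognising the operator in the statement as a relatively compact perturbation of $-A_Y$, where $A_Y$ denotes the form-operator constructed in Theorem~\ref{identK2}. By the general form theory recalled at the beginning of Section~\ref{linearcase}, $-A_Y$ already generates on $H_Y=H^{-1}(T)$ an analytic, contractive, uniformly exponentially stable semigroup of angle $\frac{\pi}{2}$, which is immediately of trace class. Hence it suffices to exhibit the difference between the two operators as a benign perturbation in the sense of analytic semigroup theory.

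Let $\widetilde A$ denote the operator in the statement. Its domain coincides with $D(A_Y)$ by construction, and the explicit description of $A_Y u$ provided by Theorem~\ref{identK2} gives, by a direct computation,
\[
\widetilde A u-(-A_Y u)=(1+\eta)\gamma(u)\,{\rm Id}_m^{-1}\bigl((1-{\rm id})^{n-2}\bigr)-c(u)\delta_1,\qquad u\in D(A_Y),
\]
with $\gamma$ as in~\eqref{serge30/08b} and $c(u)$ as uniquely determined by~\eqref{cuniqu}. Both $\gamma$ and $c$ are bounded linear functionals on $H^1(0,1)$, and the two targets ${\rm Id}_m^{-1}\bigl((1-{\rm id})^{n-2}\bigr)$ and $\delta_1$ are fixed elements of $H^{-1}(T)$; hence the difference above is a rank-at-most-two operator from $D(A_Y)\subset H^1(0,1)$ into $H^{-1}(T)$. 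Combined with the compactness of the embedding $H^1(0,1)\hookrightarrow H^{-1}(T)$ furnished by Rellich--Kondrachov, this shows that the perturbation is relatively compact with respect to $A_Y$. Invoking the standard perturbation result~\cite[Thm.~3.7.25]{AreBatHie01}, I would conclude that $\widetilde A$ generates an analytic semigroup of angle $\frac{\pi}{2}$ on $H^{-1}(T)$, and since compactness (indeed trace-class character) of the resolvent is preserved under finite-rank perturbation, the new semigroup is immediately of trace class as well.

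The main anticipated obstacle is uniform exponential stability. A relatively compact perturbation preserves the essential spectrum, which for $-A_Y$ is empty because its resolvent is compact, but may in principle create point eigenvalues of $\widetilde A$ with non-negative real part. To close the argument one still has to verify that $\sigma(\widetilde A)\subset\{\Re\lambda<0\}$. I would address this either by analysing the eigenvalue ODE $-u''+(1+\eta)\gamma(u)(1-{\rm id})^{n-2}=\lambda u$ together with the linear constraints on $\mu_0(u),\mu_n(u)$ that define $D(A_Y)$ and ruling out solutions with $\Re\lambda\ge 0$, or more economically by reinterpreting the extra term $(1+\eta)\gamma(u)(1-{\rm id})^{n-2}$ as a bounded sesquilinear perturbation of the form $a$ (in the spirit of the remark closing Section~\ref{sec:mu0}, with~\eqref{munweakcontinuous} controlling its size) and appealing to~\cite{Cro04} for simultaneous preservation of analyticity and of the spectral bound.
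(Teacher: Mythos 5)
Your proposal follows exactly the route of the paper: the paper's entire proof consists of the remark that $u\mapsto\gamma(u)\,{\rm Id}_m^{-1}\bigl((1-{\rm id})^{n-2}\bigr)$ is a relatively compact perturbation of $A_Y$, followed by the citation of \cite[Thm.~3.7.25]{AreBatHie01} --- nothing more. Your version is, if anything, more careful on the two points you raise. First, you compute the difference between the stated operator and $-A_Y$ explicitly and correctly note that it also contains the rank-one term $-c(u)\delta_1$, which the paper's proof silently ignores; since $c$ is graph-bounded and $\delta_1\in H^{-1}(T)$, this does not affect the relative-compactness step, but it does matter for the statement itself: with the $c(u)\delta_1$ contribution discarded rather than retained, the operator has $0$ in its point spectrum already for $n=1$, where $\gamma\equiv 0$ and every nonzero affine $u$ with $(\mu_0(u),\mu_1(u))^\top\in Y$ (such $u$ exist, the domain imposing only one linear condition on $a+bx$) satisfies ${\rm Id}_m^{-1}(u'')=0$; so the corollary has to be read as the perturbation of $-A_Y$ that your decomposition actually produces. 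Second, the obstacle you flag is genuine: relative compactness (and the cited theorem) yields generation, the angle $\frac{\pi}{2}$ and the trace-class smoothing, but says nothing about the location of the point spectrum, hence nothing about uniform exponential stability for arbitrary $\eta\in\mathbb C$; the paper offers no argument for this beyond the same citation. In short, your proposal reproduces everything the paper's proof actually contains, and the step you explicitly leave open (verifying $\sigma\subset\{\Real\lambda<0\}$ by the spectral or form-perturbation argument you sketch) is equally open in the paper's own proof.
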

\begin{proof}
Again the assertion follows directly from the observation that
\[
u\mapsto \gamma(u) {\rm Id}_m^{-1} \left((1-{\rm id})^{n-2}\right)
\]
is a relatively compact perturbation of $A_Y$.
Hence a well known perturbation result, cf.~\cite[Thm.~3.7.25]{AreBatHie01}, allows us to conclude.
\end{proof}

As a consequence we obtain the following existence result.
\begin{theo}
\label{wellp1general}
Let $n\in \mathbb N$ and
 $\eta, y\in \mathbb C$. Then the heat equation
$$\frac{\partial u}{\partial t}(t,x)=\frac{\partial^2 u}{\partial x^2}(t,x)+\eta \gamma(u(t,\cdot)) (1-x)^{n-2},\qquad t> 0,\; x\in (0,1),$$
(where $\gamma(u)$ is the same term defined in~\eqref{serge30/08b}) with moment conditions
$$\mu_n(u(t,\cdot))=y\mu_0(u(t,\cdot)),\qquad t> 0,$$
\[
-\mu_n(u''(t,\cdot))+\frac{\gamma(u(t,\cdot))}{2n-1}=y(-\mu_0(u''(t,\cdot))+\frac{\gamma(u(t,\cdot))}{n-1}),\qquad t> 0,
\]
and
$$
-\mu_0(u''(t,\cdot))+\frac{\gamma(u(t,\cdot))}{n-1}=(u(t,1)-u(t,0))\bar y-u(t,1)
\qquad t> 0,$$
and initial condition
$$u(0,\cdot)=u_0\in  H^{-1}(T)$$
is well-posed.
\end{theo}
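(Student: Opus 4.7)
The plan is to invoke the preceding corollary, applied to the one-dimensional subspace $Y := \{(z_0,z_1) \in \mathbb{C}^2 : z_1 = y z_0\}$, which satisfies $Y \neq \{0\}^2$ and $Y \neq \{0\}\times \mathbb{C}$. That corollary then immediately provides generation of an analytic, uniformly exponentially stable semigroup on $H_Y = H^{-1}(T)$, and hence well-posedness in the semigroup sense of the abstract Cauchy problem associated with the operator $A_Y$ for every $u_0 \in H^{-1}(T)$.

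What remains is to translate this abstract well-posedness into the PDE together with its three auxiliary conditions; I would proceed as in the proof of Theorem~\ref{wellp1}, which comes down to establishing a description of $D(A_Y^2)$ parallel to Lemma~\ref{DA2}. Condition~1 is precisely the membership $u \in D(A_Y)$. To extract conditions~2 and~3, I would expand $A_Y u$ via Theorem~\ref{identK2} as ${\rm Id}_m^{-1}(-u'' + \gamma(u)(1-x)^{n-2}) - c(u)\delta_1$, where for our one-dimensional $Y$ the characterization~\eqref{cuniqu} reduces to $c(u) = \bar y(u(1)-u(0)) - u(1)$. Using Remark~\ref{munmun1}.(5) and the explicit value $\mu_0((1-x)^{n-2}) = 1/(n-1)$, the total $\delta_1$-coefficient of $A_Y u$ equals $-\mu_0(u'') + \gamma(u)/(n-1) - c(u)$, which vanishes exactly under condition~3. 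When it does vanish, $A_Y u$ is an honest $H^1$-function in the interior, and the further requirement $A_Y u \in D(A_Y)$ --- i.e.\ $(\mu_0(A_Y u), \mu_n(A_Y u)) \in Y$ --- becomes, via $\mu_n((1-x)^{n-2}) = 1/(2n-1)$, exactly condition~2.

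Once the characterization of $D(A_Y^2)$ is in hand, the argument concludes as in the proof of Theorem~\ref{wellp1}: analyticity of the semigroup immediately maps any initial datum into $D(A_Y^2)$ for $t > 0$, so conditions~1--3 hold pointwise in time, and by Lemma~\ref{noidm} the abstract evolution reduces to the classical PDE displayed in the statement. The main obstacle, relative to the purely interior-condition cases of Theorem~\ref{wellp1} (where $Y = \{0\}^2$ or $Y = \{0\}\times \mathbb{C}$), is the genuinely non-trivial $c(u)\delta_1$ contribution in the representation of $A_Y$: one must correctly isolate the scalar identity~3 that kills it, and then check that the residual algebraic constraint on the moments of $A_Y u$ collapses to condition~2. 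All the ingredients required for this bookkeeping --- the inner-product structure of $Y^\perp$, the surjectivity result of Lemma~\ref{lemma:surj} enabling the construction of test elements with prescribed moments, and the moment identities collected in Remark~\ref{munmun1} --- are already available.
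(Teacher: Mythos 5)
Your proposal is correct and follows essentially the same route as the paper: the paper's proof also consists of applying the preceding corollary with $Y$ spanned by $(1,y)^\top$, noting that \eqref{cuniqu} reduces to $c(u)=\bar y\,(u(1)-u(0))-u(1)$, and invoking the characterization of $D(A_Y^2)$ (packaged there as Lemma~\ref{DA2gen}), whose proof is exactly the bookkeeping you outline — vanishing of the $\delta_1$-coefficient giving condition~3, and $A_Yu\in D(A_Y)$ giving condition~2. The only detail you gloss over, and which the paper makes explicit before applying Remark~\ref{munmun1}.(5), is the preliminary regularity step $u\in H^3(0,1)$ obtained by reading the identity for $A_Yu$ in $\mathcal D'(0,1)$.
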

\begin{proof}
It suffices to apply the previous Corollary with $Y$ spanned by $(1,y)^\top$ and Lemma ~\ref{DA2gen} below.
Note that the condition \eqref{cuniqu} is   then equivalent to
\[
c(u)=-u(1)-
(u(0)-u(1)) \bar y.
\]
This concludes the proof.
\end{proof}

\begin{lemma}\label{DA2gen}
Under the assumptions of Theorem \ref{identK2} one has
\begin{eqnarray*}
\label{serge14/12}
D(A_Y^2)&=&\bigg\{u\in H^3(0,1): \mu_n(u)=y \mu_0(u),\\
&&\quad \mu_n(u''-\gamma(u)  (1-{\rm id})^{n-2})=y
\mu_0(u''-\gamma(u)  (1-{\rm id})^{n-2}),\\
&&\quad \mu_0(u''-\gamma(u)  (1-{\rm id})^{n-2})=
(u(1)-u(0))\bar y-u(1)\bigg\},
\end{eqnarray*}
when $Y$ is spanned by $(1,y)^\top$ 
and 
$$
A_Y u=-u''+\gamma(u)  (1-{\rm id})^{n-2}, \quad\forall u\in D(A_Y^2).
$$
\end{lemma}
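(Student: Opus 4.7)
The argument parallels that of Lemma~\ref{DA2}, adapted to the one-dimensional subspace $Y=\mathrm{span}\{(1,y)^\top\}$. By definition $u\in D(A_Y^2)$ iff both $u$ and $A_Y u$ belong to $D(A_Y)$. The membership $u\in D(A_Y)$ directly gives $u\in H^1(0,1)$ and $\mu_n(u)=y\mu_0(u)$, which accounts for the first of the three listed conditions; the statement $A_Y u\in D(A_Y)$ will produce the other two after a regularity upgrade.

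For the inclusion ``$\subset$'', the crucial step is a bootstrap. Starting from the expression
\[
A_Y u = {\rm Id}_m^{-1}\bigl(-u''+\gamma(u)(1-{\rm id})^{n-2}\bigr) - c(u)\delta_1
\]
of Theorem~\ref{identK2} together with the hypothesis $A_Y u\in D(A_Y)\subset H^1(0,1)$, I would test against $v\in\mathcal{D}(0,1)$. The $\delta_1$-term then contributes nothing (since $v(1)=0$), and the identification ${\rm Id}\circ{\rm Id}_m^{-1}$ acts as the identity on such test functions, so that $-u''+\gamma(u)(1-{\rm id})^{n-2}=A_Y u$ in $\mathcal{D}'(0,1)$. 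The right-hand side sits in $H^1(0,1)$, and $(1-{\rm id})^{n-2}\in H^1(0,1)$ for $n\ge 2$, so $u''\in H^1(0,1)$ and consequently $u\in H^3(0,1)$.

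With $u\in H^3(0,1)$ in hand, I would apply identity~\eqref{ajoutserge10} to rewrite
\[
A_Y u = -u''+\gamma(u)(1-{\rm id})^{n-2} + \bigl[\mu_0\bigl(-u''+\gamma(u)(1-{\rm id})^{n-2}\bigr) - c(u)\bigr]\delta_1.
\]
Since $A_Y u$ is a genuine $H^1(0,1)$-function, the coefficient of $\delta_1$ must vanish; substituting the explicit value $c(u)=(u(1)-u(0))\bar y-u(1)$ provided by~\eqref{cuniqu} delivers the third listed condition. Once the $\delta_1$-term has been eliminated one has $A_Y u=-u''+\gamma(u)(1-{\rm id})^{n-2}$ in $L^2(0,1)$, so that the requirement $A_Y u\in D(A_Y)$, i.e.\ $\mu_n(A_Y u)=y\mu_0(A_Y u)$, transcribes exactly into the second listed condition. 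For the converse ``$\supset$'', starting from $u\in H^3(0,1)$ satisfying the three conditions: the first puts $u\in D(A_Y)$; the third makes the $\delta_1$-coefficient in the displayed formula vanish, so $A_Y u=-u''+\gamma(u)(1-{\rm id})^{n-2}$ is an honest $H^1(0,1)$-function; and the second places $(\mu_0(A_Y u),\mu_n(A_Y u))^\top$ on the line $Y$, so $A_Y u\in D(A_Y)$ and $u\in D(A_Y^2)$, with the claimed explicit action of $A_Y$.

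The main obstacle is the regularity bootstrap: a priori one only knows $u\in H^1(0,1)$, while $A_Y u$ is a distribution involving both the non-injective operator ${\rm Id}_m^{-1}$ and a Dirac mass at $1$. The duality pairing against $\mathcal{D}(0,1)$ cleanly isolates the interior distributional content of $u''$ and bypasses both of these nuisances; the subsequent bookkeeping that turns the unique cancellation of the $\delta_1$-coefficient into the stated boundary-type identity involving $y$ and $\bar y$ is the computationally most delicate step, and is precisely where \eqref{cuniqu} enters.
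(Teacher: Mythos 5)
Your proposal is correct and follows essentially the same route as the paper's proof: the inclusion ``$\supset$'' is verified directly, and for ``$\subset$'' one uses the same bootstrap (pairing with $v\in\mathcal D(0,1)$, where the $\delta_1$-term and the ambiguity of ${\rm Id}_m^{-1}$ disappear, giving $-u''+\gamma(u)(1-{\rm id})^{n-2}=A_Yu$ in $\mathcal D'(0,1)$ and hence $u\in H^3(0,1)$), followed by the vanishing of the $\delta_1$-coefficient, the substitution of $c(u)$ from \eqref{cuniqu}, and the identification $A_Yu=-u''+\gamma(u)(1-{\rm id})^{n-2}$ together with the moment condition for $A_Yu\in D(A_Y)$. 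One small caveat: the formula \eqref{ajoutserge10} you invoke carries a sign typo in the paper (the identification is ${\rm Id}_m^{-1}g=g-\mu_0(g)\delta_1$, as used in the proof of Lemma~\ref{DA2}), so the $\delta_1$-coefficient is $\mu_0(u'')-\gamma(u)\mu_0\bigl((1-{\rm id})^{n-2}\bigr)-c(u)$ rather than the one in your display; with this corrected sign its vanishing gives $\mu_0\bigl(u''-\gamma(u)(1-{\rm id})^{n-2}\bigr)=c(u)=(u(1)-u(0))\bar y-u(1)$, i.e.\ exactly the stated third condition, whereas your displayed coefficient would produce it with the opposite sign.
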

\begin{proof}
The inclusion ``$\supset$'' holds because for $u$ in the right-hand side of \eqref{serge14/12},
$-u''+\gamma(u)  (1-{\rm id})^{n-2}$ clearly belongs to $D(A_Y)$ and 
$A_Y u=-u''+\gamma(u)  (1-{\rm id})^{n-2}$, which also belongs to $D(A_Y)$.

We now prove the converse inclusion ``$\subset$''. Let us fix $u\in D(A_Y^2)$. Then $u\in H^1(0,1)$ and
\[
A_Y u={\rm Id}^{-1}_m \left(-u''+\gamma(u) (1-{\rm id})^{n-2}\right)-c(u)\delta_1\in H^1(0,1).
\]
This identity implies that

\[
-u''=A_Yu-\gamma(u) (1-{\rm id})^{n-2} \hbox{ in } {\mathcal D}'(0,1),
\]
and the assumption $A_Y u\in H^1(0,1)$ implies that $u\in H^3(0,1)$.
With this regularity at hands, by ~\cite[Rem.~2.6]{MugNic11} we see that
\[
A_Y u=-u''+\gamma(u) (1-{\rm id})^{n-2})+\left(\mu_0(u'')-\gamma(u) \mu_0((1-{\rm id})^{n-2})
-c(u)\right)\delta_1\in H^1(0,1).
\]

Hence necessarily 
\[
\mu_0(u'')-\gamma(u) \mu_0((1-{\rm id})^{n-2})
-c(u)=0,
\]
and  
\[
A_Y u=-u''+\gamma(u) (1-{\rm id})^{n-2}.
\]

Because $u\in D(A_Y^2)$, $-u''+\gamma(u) (1-{\rm id})^{n-2}$ belongs to $D(A_Y)$ and  the characterization of $D(A_Y)$ completes the proof.
\end{proof}

\section{The porous medium equation}\label{nonlinearcase}

Throughout this section, we consider real valued functions spaces and we let 
\begin{itemize}
\item $p\in (1,\infty)$,
\item $Y$ be a subspace of $\mathbb R^2$,
\item $n\in \mathbb N$.
\end{itemize}
We consider the functional ${\mathcal E}_Y:H_Y\to \mathbb R_+\cup \{+\infty\}$ defined by
$${\mathcal E}_Y:f\mapsto 
\left\{
\begin{array}{ll}
\frac{1}{p}\int_0^1 |f(x)|^p dx,\qquad &\hbox{if } f\in V^{(n)}_{Y,p},\\
+\infty, &\hbox{otherwise,}
\end{array}
\right.$$
where $H_Y$ and $V^{(n)}_{Y,p}$ are the spaces introduced in~\eqref{defi_hy} and~\eqref{defi:V^n}. It turns out that the subdifferential of ${\mathcal E}_Y$ has particularly good properties.


\begin{theo}\label{prop_functi}
Let $Y$ be a subspace of $\mathbb R^2$.  
 Then the functional {$\mathcal E_Y$ is proper and convex.
Furthermore, $\mathcal E_Y$ is continuously Fr\'echet differentiable as a functional on $V^{(n)}_{Y,p}$ and lower semicontinuous as a functional on $H_Y$
.}
\end{theo}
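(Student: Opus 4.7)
The three assertions split cleanly, and the proofs run in parallel with the classical treatment of the $L^p$-norm functional. I will dispose of properness and convexity first, treat Fr\'echet differentiability as a standard consequence of the smoothness of $t\mapsto \tfrac{1}{p}|t|^p$ on $L^p$, and leave lower semicontinuity with respect to the $H_Y$-topology for last, which I expect to be the only non-routine point.

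\textbf{Properness and convexity.} Since $0\in V^{(n)}_{Y,p}$ and $\mathcal E_Y(0)=0$, the functional is not identically $+\infty$; as it takes values in $[0,+\infty]$ it is proper. For convexity, the set $V^{(n)}_{Y,p}$ is a (convex) vector subspace of $L^p(0,1)$, since $\mu_0$ and $\mu_n$ are linear and $Y$ is a subspace of $\mathbb R^2$. Therefore for $f,g\in V^{(n)}_{Y,p}$ and $\lambda\in[0,1]$ the convex combination stays inside $V^{(n)}_{Y,p}$, and the inequality $\mathcal E_Y(\lambda f+(1-\lambda)g)\le \lambda\mathcal E_Y(f)+(1-\lambda)\mathcal E_Y(g)$ follows from pointwise convexity of $t\mapsto \tfrac{1}{p}|t|^p$ together with the elementary observation that if one of $f,g$ is outside $V^{(n)}_{Y,p}$ the right-hand side is $+\infty$.

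\textbf{Fr\'echet differentiability on $V^{(n)}_{Y,p}$.} This is a standard fact about the functional $f\mapsto \tfrac{1}{p}\int_0^1|f|^p$ on all of $L^p(0,1)$: the Fr\'echet derivative at $f$ is the linear functional $\phi\mapsto \int_0^1|f|^{p-2}f\,\phi\,dx$, which is bounded on $L^p$ because $|f|^{p-2}f\in L^{p'}(0,1)$ with norm $\|f\|_{L^p}^{p-1}$. The continuity of $f\mapsto |f|^{p-2}f$ as a map from $L^p$ into $L^{p'}$ (a classical Nemytskii estimate) gives continuous Fr\'echet differentiability. Since $V^{(n)}_{Y,p}$ is a closed linear subspace of $L^p(0,1)$, the restriction of this derivative to $V^{(n)}_{Y,p}$ yields the differential of $\mathcal E_Y$ on the subspace.

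\textbf{Lower semicontinuity on $H_Y$.} This is the part I expect to be most delicate. The strategy is to combine reflexivity of $L^p$ with a weak-to-weak compatibility argument. Suppose $f_k\to f$ in $H_Y$; we need to show $\mathcal E_Y(f)\le\liminf_k \mathcal E_Y(f_k)$. Passing to a subsequence we may assume the $\liminf$ is a limit and is finite (otherwise there is nothing to prove), so $(f_k)\subset V^{(n)}_{Y,p}$ and $(\|f_k\|_{L^p})$ is bounded. By reflexivity of $L^p(0,1)$ (here $p>1$) extract a further subsequence with $f_k\rightharpoonup g$ weakly in $L^p$. The embedding $L^p(0,1)\hookrightarrow H^{-1}(T)$ is continuous (test against $H^1(T)\hookrightarrow C[0,1]\hookrightarrow L^{p'}$), hence weak-to-weak continuous, so $f_k\rightharpoonup g$ weakly in $H^{-1}(T)$; combined with $f_k\to f$ strongly in $H_Y$ this forces $f=g$. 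The subspace $V^{(n)}_{Y,p}$ is weakly closed in $L^p$ because it is the kernel modulo $Y$ of the continuous map $f\mapsto (\mu_0(f),\mu_n(f))\in\mathbb R^2$, so $f\in V^{(n)}_{Y,p}$. Weak lower semicontinuity of the $L^p$-norm then gives
\[
\mathcal E_Y(f)=\tfrac{1}{p}\|f\|_{L^p}^p\le \liminf_k \tfrac{1}{p}\|f_k\|_{L^p}^p=\liminf_k \mathcal E_Y(f_k),
\]
which completes the argument.
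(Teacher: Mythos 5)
Your proposal is correct, and for the first two assertions it coincides with the paper's argument: properness and convexity are dismissed as elementary, and continuous Fr\'echet differentiability is obtained from the explicit derivative $h\mapsto\int_0^1|f|^{p-2}f\,h\,dx$ together with the Nemytskii-type continuity of $f\mapsto|f|^{p-2}f$ from $L^p$ into $L^{p'}$ (the paper simply records the derivative formula~\eqref{eprime}). Where you diverge is the last and only delicate point, lower semicontinuity on $H_Y$: the paper does not prove it directly but invokes \cite[Lemma~IV.5.2]{Sho97}, which handles exactly the situation of a convex, l.s.c., coercive functional on a reflexive space $V^{(n)}_{Y,p}$ densely and continuously embedded in the Hilbert space $H_Y$, extended by $+\infty$. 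Your argument is in effect a self-contained proof of that lemma in the case at hand: boundedness of the energy along a strongly $H_Y$-convergent sequence gives an $L^p$-bounded sequence, reflexivity of $L^p$ ($p>1$) yields a weak $L^p$ limit, the continuity of the embedding $L^p(0,1)\hookrightarrow H^{-1}(T)$ (via $H^1(T)\hookrightarrow C[0,1]$) identifies that limit with the $H_Y$-limit, weak closedness of the closed subspace $V^{(n)}_{Y,p}$ keeps the limit admissible, and weak lower semicontinuity of the $L^p$-norm concludes. The paper's route buys brevity; yours makes explicit which structural facts (reflexivity, coercivity of $\mathcal E_Y$ on $V^{(n)}_{Y,p}$, continuity of the embedding into $H^{-1}(T)$) are actually used, which is a worthwhile clarification and entirely sound.
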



\begin{proof}
It is clear that $\mathcal E_Y$ is proper and convex. Moreover, it is continuously differentiable -- in fact, the Fr\'echet derivative of $\mathcal E_Y$ is given by
\begin{equation}
\label{eprime}
\mathcal E_Y'(f)h=\int_0^1 |f(x)|^{p-2}f(x)h(x) \, dx, \quad f,h \in V^{(n)}_{Y,p}.
\end{equation}

Thus, $\mathcal E_Y$ is in particular lower semicontinuous as a functional on $V^{(n)}_{Y,p}$, and lower semicontinuity as a functional on $H_Y$ follows from~\cite[Lemma~IV.5.2]{Sho97}.
\end{proof}

\begin{rem}
An additional term of the form $\Phi(\begin{pmatrix} \mu_0(f)\\ \mu_n(f)\end{pmatrix})$, where $\Phi:Y\to \mathbb R$ is a convex continuously differentiable function, may be easily dealt with by means of simple perturbation results for nonlinear forms, see  e.g.~\cite{MugPro11}.
\end{rem}

\begin{theo}\label{identkp}
Let $\gamma(\cdot)$ be defined by \eqref{serge30/08b}. Then the following assertions hold.
\begin{enumerate}[(1)]
\item 
If either $Y=\{0\}^2$ or $Y=\{0\}\times \mathbb R$, then the subdifferential $\partial{\mathcal E}_{Y}$ of $\mathcal E_{Y}$ with respect to $H_Y$ agrees with the nonlinear operator $A_{Y}$ given by\begin{eqnarray*}
D(A_Y)&:=&\{f\in L^p(0,1): |f|^{p-2}f\in H^1(0,1):\mu_0(f)=\mu_n(f)=0\}\qquad \hbox{or}\\
D(A_Y)&:=&\{f\in L^p(0,1): |f|^{p-2}f\in H^1(0,1):\mu_0(f)=0,\; f(0)=f(1)\}
\end{eqnarray*}
respectively, whose action is given in both cases by
\[
A_Y f:={\rm Id}_m^{-1}(-(|f|^{p-2}f)''+\gamma(f) (1-{\rm id})^{n-2}).
\]
\item If $Y\not=\{0\}^2$ and $Y\not=\{0\}\times \mathbb R$, then
the subdifferential $\partial{\mathcal E}_{Y}$ of $\mathcal E_{Y}$ with respect to $H_Y$ agrees with the nonlinear operator $A_{Y}$ given by
\begin{eqnarray*}
D(A_Y)&:=& \left\{f\in L^p(0,1): |f|^{p-2}f\in H^1(0,1)\hbox{ and } \;\begin{pmatrix}\mu_0(f)\\ \mu_n(f)\end{pmatrix}\in Y\right\},\\
\end{eqnarray*}
whose action is given by
\[
A_Y f:={\rm Id}_m^{-1}(-(|f|^{p-2}f)''+\gamma(|f|^{p-2}f) (1-id)^{n-2}) -c(f)\delta_1.
\]
Here  $c(f)\in \mathbb R$ is uniquely determined by the condition
\begin{equation}
\label{cuniqu-pme}
\begin{pmatrix}
c(f)+|f|^{p-2}(1)f(1) \\
|f|^{p-2}(0)f(0)-|f|^{p-2}(1)f(1)
\end{pmatrix}\in Y^\perp.
\end{equation}
\end{enumerate}
\end{theo}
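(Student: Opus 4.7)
The plan is to reduce the nonlinear identification to the same integration-by-parts bookkeeping that was carried out in Theorems~\ref{identK} and~\ref{identK2} for the linear problem, with $u$ replaced by the auxiliary function $u:=|f|^{p-2}f$.

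Since by Theorem~\ref{prop_functi} the functional $\mathcal E_Y$ is proper, convex, lower semicontinuous on $H_Y$ and continuously Fr\'echet differentiable on the linear subspace $V^{(n)}_{Y,p}$ with derivative \eqref{eprime}, standard convex analysis yields that $g\in \partial\mathcal E_Y(f)$ if and only if $f\in V^{(n)}_{Y,p}$ and
\[
(g|k)_{H_Y}=\int_0^1 |f(x)|^{p-2}f(x)\,k(x)\,dx=(u|k)_{L^2},\qquad \forall k\in V^{(n)}_{Y,p}.
\]
Uniqueness of $g\in H_Y$ is ensured by the density result of Corollary~\ref{lemmadensityn}. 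The task is thus to show that this variational identity is equivalent to $u\in H^1(0,1)$, the boundary/moment conditions appearing in $D(A_Y)$, and $g=A_Y f$.

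For the inclusion $A_Y\subset\partial\mathcal E_Y$ I would fix $f\in D(A_Y)$, so that $u\in H^1(0,1)$, and compute $(A_Y f|k)_{H_Y}$ using the $P_n$-form of the inner product furnished by Remark~\ref{rem-mugnic11} together with Theorem~\ref{ibp} applied to $u$. The $-u''$-summand yields $(u|k)_{L^2}$ plus the three moment contributions of Theorem~\ref{ibp}, while the potential summand $\gamma(u)(1-{\rm id})^{n-2}$ contributes, via the identity~\eqref{serge14/11} and Remark~\ref{munmun1}.(3), a term proportional to $\mu_1(k)-\mu_n(k)$. In case (1) with $Y=\{0\}^2$ one has $\mu_0(k)=\mu_n(k)=0$, leaving only a coefficient of $\mu_1(k)$ that the explicit choice~\eqref{serge30/08b} of $\gamma$ is designed to annihilate; the sub-case $Y=\{0\}\times\mathbb R$ is closed off by the extra requirement $u(0)=u(1)$, which is equivalent to $f(0)=f(1)$ since $s\mapsto |s|^{p-2}s$ is a bijection on $\mathbb R$. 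In case (2) the additional $-c(f)\delta_1$ correction absorbs the remaining $\mu_0(k)$-term exactly through the condition~\eqref{cuniqu-pme}, by the same computation as in Theorem~\ref{identK2}.

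For the reverse inclusion I would run the projection argument of Theorems~\ref{identK} and~\ref{identK2}: restricting the variational identity to $k\in V^{(n)}_{\{0\}^2,p}$ and appealing to Lemma~\ref{lemma1} together with the orthogonal projection $\Pi$ of $L^2(0,1)$ onto the span of $\{1,(1-{\rm id})^n\}$, one obtains $u=-P_n(P_n g)+\Pi(P_n(P_n g)+u)$, which forces $u\in H^1(0,1)$ and pins down $g$ up to a scalar multiple of $(1-{\rm id})^{n-2}$ (interpreted as the potential) and, in case (2), a $\delta_1$-contribution with scalar $c(f)$. Testing back against arbitrary $k\in V^{(n)}_{Y,p}$ fixes the potential as $\gamma(u)$ via the vanishing of the $\mu_1(k)$-coefficient and, in case (2), determines $c(f)$ via~\eqref{cuniqu-pme}; simultaneously this forces the moment/boundary conditions defining $D(A_Y)$. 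The main obstacle is purely notational: once the map $f\mapsto u=|f|^{p-2}f$ is introduced, the nonlinearity disappears from the Hilbert-space side of the duality and the entire identification reduces to the linear template already executed in Section~\ref{linearcase}; the only delicate point is the regularity upgrade $u\in H^1(0,1)$, which is carried out by exactly the same projection trick and requires no integration-by-parts input beyond Theorem~\ref{ibp}.
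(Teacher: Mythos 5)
Your overall strategy is exactly the paper's: the authors do not write the proof out, but state that it is obtained by running the proofs of Theorems~\ref{identK} and~\ref{identK2} with $f$ replaced by $u:=|f|^{p-2}f$, which is what you propose. However, one step of your reverse inclusion fails as written, and it is precisely the point the paper singles out as the essential modification. You invoke ``the orthogonal projection $\Pi$ of $L^2(0,1)$ onto the span of $\{1,(1-{\rm id})^n\}$'' and write $u=-P_n(P_ng)+\Pi\bigl(P_n(P_ng)+u\bigr)$. But for $f\in D(\partial\mathcal E_Y)$ you only know $f\in L^p(0,1)$, hence $u=|f|^{p-2}f\in L^{p'}(0,1)$ with $p'=\tfrac{p}{p-1}$; in the porous medium range $p>2$ one has $p'<2$, so $u$ (and $u+P_n(P_ng)$) need not belong to $L^2(0,1)$ and the $L^2$-orthogonal projection is simply not defined on it. The conclusion you want is still true, but it must be reached inside the $L^p$--$L^{p'}$ duality: from $(u+P_n(P_ng)\,|\,h)=0$ for all $h\in V^{(n)}_{\{0\}^2,p}$ one deduces that $u+P_n(P_ng)$, as an element of $(L^p)'=L^{p'}$, annihilates that closed subspace of codimension $2$, hence lies in the span of $1$ and $(1-{\rm id})^n$ (surjectivity of $(\mu_0,\mu_n)$ from Lemma~\ref{lemma:surj} identifies the annihilator); equivalently, the paper replaces the orthogonal projection by the best-approximation map in $L^{p'}$ onto that two-dimensional subspace. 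With this repair the regularity upgrade $u\in H^1(0,1)$ and the identification of the potential and of $c(f)$ go through as you describe.

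Two further small points you should make explicit. First, Theorem~\ref{ibp} is stated for test functions $h\in L^2(0,1)$, whereas here you test against $h\in V^{(n)}_{Y,p}\subset L^p(0,1)$, which for $p<2$ is not contained in $L^2$; the formula extends to $h\in L^p$ by density, since $L^p(0,1)\hookrightarrow H^{-1}(T)$ and all terms on the right-hand side ($u\in H^1\subset L^\infty$ paired with $h$, and the moments $\mu_0,\mu_1,\mu_n$) are continuous on $L^1(0,1)$. Second, the paper also notes the one-line verification that the membership $|f|^{p-2}f\in H^1(0,1)\hookrightarrow L^{p'}(0,1)$ forces $|f|^p\in L^1(0,1)$, i.e.\ $f\in L^p(0,1)$, so that the two descriptions of the domain are consistent; in your framing this is needed when checking $D(A_Y)\subset D(\partial\mathcal E_Y)$, since $\mathcal E_Y'(f)$ only makes sense for $f\in V^{(n)}_{Y,p}$. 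Note finally that the constant in the potential must be $\gamma(|f|^{p-2}f)$ (as you indeed use, via $\gamma(u)$), since $\gamma$ is only defined on $H^1(0,1)$ and $f$ itself has no trace at $0$.
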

Here, by definition, for a given reflexive Banach space $X$ the subdifferential of a functional $\mathcal E$ with respect to a Hilbert space $H$ such that $X$ is continuously and densely embedded in $H$ is the single-valued operator given by
\begin{eqnarray*}
D(\partial \mathcal{E})&:=&\{f \in H_Y: \exists \, g \in H_Y \text{ s.t. } \mathcal{E}^\prime(f)h =(g|h)_{H_Y} \;\forall h \in V^{(n)}_{Y,p}\}, \\
 \partial \mathcal{E}(f)&:=&g,
\end{eqnarray*} 
cf.~\cite[Lemma~2.8.9]{Nit10}.

\begin{proof}
We do not deliver the proof, which can be performed closely following those of Theorems~\ref{identK} and~\ref{identK2}  if $n\ge 2$, or rather the proofs of~\cite[Thm.~3.1, 3.3, 4.3]{MugNic11} if $n=1$, up to replacing $f$ by $|f|^{p-2}f$ throughout. The only noteworthy modifications are the following ones: On one hand, one has to replace the orthogonal projection  $\Pi \psi\in L^2 $ of a vector $\psi\in L^2(0,1)$ onto the space of polynomials spanned by  $1$ and $(1-{\rm id})^n$ by the vector  $\Pi \psi \in L^{p'}(0,1)$ of best approximation of $\psi\in L^{p'}(0,1)$ in the subspace of ${\mathbb R}[x]$ spanned by  $1$ and $(1-{\rm id})^n$:  This makes sense even if $\Pi$ is not an \emph{orthogonal} projection, and of course ${\mathbb R}[x]=\Pi {\mathbb R}[x] + (I-\Pi){\mathbb R}[x]$ is dense in $L^p(0,1)$. On the other hand, one must check directly that each $f\in D(\partial {\mathcal E})$ is of class $L^p(0,1)$ -- this can in turn be done observing that because by assumption $|f|^{p-2}f\in H^1(0,1)\hookrightarrow L^\frac{p}{p-1}(0,1)$, $f^p\in L^1(0,1)$, i.e., $f\in L^p(0,1)$.
\end{proof}


By virtue of Theorems\ref{prop_functi} and~\ref{identkp}, the following assertion is a direct consequence of the general theory of nonlinear semigroups associated with subdifferentials, see e.g.\ ~\cite[Prop.~IV.5.2]{Sho97} and~\cite[Th\'eo.~3.2]{Bre73}.

\begin{prop}\label{ralphwell}
Let $p\in (1,\infty)$. Let $T>0$, $u_0\in V^{(n)}_{Y,p}$ and $f\in L^2(0,T;H_Y)$. Then the abstract Cauchy problem 
\begin{equation}\label{acpy}
\left\{
\begin{array}{rcll}
\frac{d u}{d t}(t)&=&-A_Y u(t)+f(t),\qquad & t>0,\\
u(0)&=&u_0,
\end{array}
\right.
\end{equation}
admits a unique solution $u\in H^1(0,T;H_Y)\cap L^\infty(0,T;V^{(n)}_{Y,p})$.

If $f\equiv0$, then the the following additional assertions hold:
\begin{itemize}
\item the solution of~\eqref{acpy} is given by a strongly continuous semigroup of nonlinear contractions,
\item $u(t,\cdot)\in D(A_Y)$ for all $t>0$, 
\item $u\in C([0,\infty);H_Y)$ is everywhere right differentiable and satisfies~\eqref{acpy} -- and in particular the initial condition -- for all $t\ge 0$, and finally
\item $t\mapsto \|u(t)\|_{L^p}^p$ is convex and monotonically decreasing on each interval $[\delta,\infty)$ for all $\delta>0$.
\end{itemize}
\end{prop}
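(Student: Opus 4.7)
The plan is to invoke the general theory of nonlinear semigroups generated by subdifferentials of proper, convex, lower semicontinuous functionals on a Hilbert space, in the form developed in~\cite[Théo.~3.2 and 3.6]{Bre73} and~\cite[Prop.~IV.5.2]{Sho97}. All the structural ingredients needed to apply this theory are already at our disposal: Theorem~\ref{prop_functi} establishes that $\mathcal{E}_Y:H_Y\to \mathbb R_+\cup\{+\infty\}$ is proper, convex and lower semicontinuous, while Theorem~\ref{identkp} identifies its subdifferential with respect to the inner product of $H_Y$ as precisely the nonlinear operator $A_Y$. In particular, $A_Y$ is maximal monotone on $H_Y$.

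The core of the argument then reduces to a direct application of Brezis' existence theorem to~\eqref{acpy}. Since $u_0\in V^{(n)}_{Y,p}=D(\mathcal{E}_Y)$, i.e.\ the initial datum belongs to the effective domain of the functional (and not merely to its closure in $H_Y$), one obtains a unique strong solution with $u\in H^1(0,T;H_Y)$. Combining this with the standard energy identity
\[
\mathcal{E}_Y(u(t))+\int_0^t \|u'(s)\|_{H_Y}^2\,ds = \mathcal{E}_Y(u_0)+\int_0^t \big(f(s)\,\big|\,u'(s)\big)_{H_Y}\, ds,\qquad t\in[0,T],
\]
a Cauchy--Schwarz and Young inequality on the right-hand side yields a uniform bound on $\mathcal{E}_Y(u(t))$ over $[0,T]$. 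Because $\mathcal{E}_Y(v)=\tfrac{1}{p}\|v\|_{L^p}^p$ whenever finite, this bound is exactly the desired $L^\infty(0,T;V^{(n)}_{Y,p})$-estimate.

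For the additional assertions when $f\equiv 0$, I would appeal to the well-known regularizing properties of semigroups generated by subdifferentials: the semigroup $t\mapsto u(t)$ maps $\overline{D(A_Y)}^{H_Y}$ into $D(A_Y)$ for every $t>0$, it is right differentiable at every $t\geq 0$, and satisfies the evolution equation pointwise (see~\cite[Théo.~3.2]{Bre73}). The contraction property is immediate from the monotonicity of $A_Y$. Finally, the convexity and monotone decrease of $t\mapsto \|u(t)\|_{L^p}^p$ on each $[\delta,\infty)$ follows from the gradient flow identity
\[
\frac{d}{dt}\mathcal{E}_Y(u(t))=-\|u'(t)\|_{H_Y}^2\leq 0,
\]
valid pointwise for $t>0$, together with the convexity of the map $t\mapsto \mathcal{E}_Y(u(t))$ that is inherent to flows driven by subdifferentials of convex functionals.

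Since the whole statement is essentially a translation of the abstract Brezis--Komura theory to our concrete setting, the only genuine work consists in verifying the hypotheses of that abstract theory, which has already been done in Theorems~\ref{prop_functi} and~\ref{identkp}. Consequently no serious obstacle is expected; the write-up amounts to collecting references and pointing out the identifications explicitly.
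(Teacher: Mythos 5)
Your proposal is correct and takes essentially the same route as the paper, which proves Proposition~\ref{ralphwell} simply by combining Theorems~\ref{prop_functi} and~\ref{identkp} with the abstract theory of nonlinear semigroups generated by subdifferentials, citing exactly the results you invoke (\cite[Prop.~IV.5.2]{Sho97} and \cite[Th\'eo.~3.2]{Bre73}). Your extra details (the energy identity giving the $L^\infty(0,T;V^{(n)}_{Y,p})$ bound, the smoothing effect, and the convexity of $t\mapsto\mathcal{E}_Y(u(t))$) merely spell out what those cited abstract results already deliver.
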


\begin{rem}
Observe that $A_Y$ is \emph{not} the operator associated with the (non-signed) PME. Hence, unlike in the setting discussed in Section~\ref{linearcase}, in the present nonlinear case we are not able to show that the Cauchy problem associated with $-A_Y$ takes the form of a common PME (possibly with potential), because the general theory of subdifferentials apparently does not ensure enough regularity of solutions to allow us to drop the term ${\rm Id}_m^{-1}$. However, taking into account Remark~\ref{munmun1}.(5) one observes that $A_Y$ does indeed act on twice weakly differentiable functions as a second order differential operator with a potential term.  

{Observe that a possible, weaker but still sufficient approach would consist in restricting ourselves to consider initial values in $V^{(n)}_{Y,p}$, or some other subspace of $L^2(0,1)$ that is left invariant under the semigroup; and then showing that the restriction of the semigroup is strongly continuous, and that its generator is a restriction of $A_Y$ to some subspace of $H^2(0,1)$. In view of Lemma~\ref{noidm}, this would do the job. Unfortunately, it seems to be unclear to which extent this kind of procedure is allowed by the general theory of nonlinear semigroups. However, it is for instance known that the operators associated with the PME on bounded domains $\Omega$ ``generate semigroups of order-preserving contractions in $L^1(\Omega)$'', cf. \cite[\S~20.1.2]{Vaz07}.}
\end{rem}

}

{We have already seen that in the linear case the problem is governed by a uniformly exponentially stable semigroup on $H_Y$. This means that the $H_Y$-norm of each solution tends to 0 as $t\to \infty$, uniformly for all initial data. What about the nonlinear case? We alreay know from Theorem~\ref{ralphwell} that if the inhomogeneous term $f\equiv0$, then the $L^p$-norm of the solution is decreasing. Further information can be obtained. Inspired by the classical analysis of the PME, see e.g.~\cite[Thm.~11.9]{Vaz07}, we obtain the following, where for the sake of simplicity we restrict to the case of $Y=\{0\}^2$.
\begin{prop}
Consider the abstract Cauchy problem~\eqref{acpy} with $u_0\in V^{(n)}_{Y,p}$ and $f=0$. If $Y=\{0\}^2$, then the solution $u$ given by Theorem~\ref{ralphwell} satisfies
\[
\|u(t)\|_{H_Y}^2\le Kt^{-\frac{2}{p-2}}\qquad \hbox{if }p>2,
\]
for some $K>0$ only depending on $p$, whereas
\[
\|u(t)\|_{H_Y}^2\le \|u(0)\|_{H_Y}^2 e^{-Kt}\qquad \hbox{if }p\le 2,
\]
for some $K>0$ proportional to $\|u(0)\|_{H_Y}^{\frac{p-2}{2}}$.
\end{prop}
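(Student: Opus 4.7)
The plan is to derive and then integrate a differential inequality for $y(t):=\|u(t)\|_{H_Y}^2$. First, I would exploit the identification $A_Y=\partial\mathcal{E}_Y$ (Theorem~\ref{identkp}) together with the $p$-homogeneity of $\mathcal{E}_Y$. Since $u(t)\in D(A_Y)\subset V^{(n)}_{Y,p}$ for every $t>0$ by Proposition~\ref{ralphwell}, applying the chain rule in $H_Y$ along with Euler's identity to~\eqref{eprime} yields
\begin{equation*}
\tfrac{d}{dt}y(t)=-2\bigl(A_Y u(t)\bigm| u(t)\bigr)_{H_Y}=-2\,\mathcal{E}_Y'(u(t))[u(t)]=-2\int_0^1|u(t,x)|^p\,dx.
\end{equation*}
In particular $y$ is non-increasing, so $y(t)\le y(0)$ throughout.

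Second, I would establish a Poincar\'e-type bound $\|u\|_{H_Y}\lesssim\|u\|_{L^p}$ on $V^{(n)}_{Y,p}$. Since $Y=\{0\}^2$, Remark~\ref{rem-mugnic11} gives $\|u\|_{H_Y}\simeq\|P_1u\|_{L^2}$. Using $\mu_0(u)=0$, the pointwise estimate $|P_1 u(x)|\le 2\|u\|_{L^1}$ (immediate from the definition of $P_1$) together with Jensen's inequality on the unit interval, one obtains $\|u\|_{H_Y}\lesssim\|u\|_{L^1}\le\|u\|_{L^p}$. Raising to the $p$-th power gives $\|u(t)\|_{L^p}^p\ge c\,y(t)^{p/2}$ for some $c>0$ independent of $t$, and combining with the first step produces the ODI
\begin{equation*}
y'(t)\le -2c\,y(t)^{p/2},\qquad t>0.
\end{equation*}

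Third, I would integrate this ODI in the three regimes. If $p>2$, the substitution $z:=y^{1-p/2}$ transforms the inequality into $z'\ge c(p-2)$; integrating and dropping the positive initial term yields the polynomial bound $y(t)\le(c(p-2)t)^{-2/(p-2)}$. If $p=2$, the ODI is linear and Gronwall's lemma gives $y(t)\le y(0)e^{-2ct}$. If $p\in(1,2)$, a sharp integration of the ODI would predict extinction in finite time; to obtain only the claimed exponential decay, I would invoke the monotonicity $y(t)\le y(0)$ together with the sign $(p-2)/2<0$ to estimate
\begin{equation*}
y(t)^{p/2}=y(t)\cdot y(t)^{(p-2)/2}\ge y(0)^{(p-2)/2}y(t),
\end{equation*}
whence the ODI linearises to $y'\le -2c\,y(0)^{(p-2)/2}y$ and Gronwall gives exponential decay at a rate proportional to $\|u(0)\|_{H_Y}^{(p-2)/2}$.

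The principal obstacle is the fast-diffusion case $p<2$: the natural ODI is so strong that its \emph{sharp} integration would produce extinction in finite time, whereas the proposition asks only for exponential decay. The trick just described---sacrificing sharpness in favour of the monotonicity of $y$---is what yields a \emph{bona fide} exponentially decaying bound valid for all $t\ge 0$, at the price of a decay rate that degenerates with the size of the initial datum in $H_Y$. A secondary technical point to watch in the first step is the justification of the chain rule along the trajectory, which relies on the Lipschitz regularity of $t\mapsto u(t)$ in $H_Y$ guaranteed by Proposition~\ref{ralphwell} rather than on any smoother differentiability.
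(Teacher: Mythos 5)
Your argument is correct and, from the differential inequality onward, coincides with the paper's proof: the same Poincar\'e-type use of the embedding $V^{(n)}_{Y,p}\hookrightarrow H_Y$, the same inequality $y'\le -C y^{p/2}$, the same integration for $p>2$ (your substitution $z=y^{1-p/2}$ is the paper's auxiliary function $w(t)=C(\alpha-1)t-v(t)^{1-\alpha}$ in disguise), and the identical linearization-by-monotonicity trick for $p\le 2$. The one genuinely different step is how you obtain the dissipation identity $\tfrac12\tfrac{d}{dt}\|u(t)\|_{H_Y}^2=-\|u(t)\|_{L^p}^p$: you take $h=u(t)\in D(A_Y)\subset V^{(n)}_{Y,p}$ in the defining relation $(\partial\mathcal E_Y(u)|h)_{H_Y}=\mathcal E_Y'(u)h$ and use Euler's identity for the $p$-homogeneous functional, whereas the paper inserts the explicit representation of $A_Y$ from Theorem~\ref{identkp}, applies the integration-by-parts formula of Theorem~\ref{ibp}, computes $\left(u\big|(1-{\rm id})^{n-2}\right)_{H_Y}$ separately, and checks that the contributions of the potential term $\gamma(\cdot)(1-{\rm id})^{n-2}$ cancel. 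Your route is shorter, avoids the $n$-dependent cancellation entirely, and would work verbatim for any admissible $Y$; the paper's computation has the merit of displaying concretely how the moment conditions $\mu_0(u)=\mu_n(u)=0$ eliminate the potential. Two minor points: for $p>2$ you should, as the paper does, either assume $y(t)>0$ or observe that once $y$ vanishes it stays zero by monotonicity, so the substitution $z=y^{1-p/2}$ is legitimate; and your Gronwall rate for $p<2$ is $2c\,y(0)^{(p-2)/2}=2c\,\|u(0)\|_{H_Y}^{p-2}$, i.e.\ the exponent $\tfrac{p-2}{2}$ is taken of $\|u(0)\|_{H_Y}^2$ (exactly as in the paper's own proof), which is harmless but worth stating precisely.
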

Observe that for $p=2$ we recover information already yield from the exponential stability of the linear semigroup (cf.~Corollary~\ref{cor:mu0}), whereas for $p<2$ (the case of the FDE) we deduce exponential decay -- with decay rate depending on the initial data.
}

\begin{proof}
By the definition of the inner product in $H_Y$ and since by assumption $\mu_0(u(t))=\mu_n(u(t))=0$, we find that
\begin{eqnarray*}
\left(u(t)\big|(1-{\rm id})^{n-2}\right)_{H_Y}&=&\int_0^1\int_0^x u(t,y)dy(1-x)^{n-1}dx-\frac{n}{(n-1)(2n-1)} \mu_1(u(t))\\
&=&\frac1n \mu_n(u(t))-\frac{n}{(n-1)(2n-1)} \mu_1(u(t))\\
&=&-\frac{n}{(n-1)(2n-1)} \mu_1(u(t)),
\end{eqnarray*}
where the second identity follows integrating by parts.
Hence for all $t\ge 0$
\begin{eqnarray*}
\frac12 \frac{{d} \|u(t)\|_{H_Y}^2}{dt}
&=& \left(u(t)|\frac{du}{dt}(t) \right)_{H_Y}\\
&=& -\left(u(t)|A_Y u(t) \right)_{H_Y}\\
&=& -\left(u(t)|{\rm Id}_m^{-1}(-(|u(t)|^{p-2}u(t))''+\gamma(u(t)) (1-{\rm id})^{n-2})\right)_{H_Y}\\
&=& -\left(u(t)|(|u(t)|^{p-2}u(t))\right)_{L_2}-\gamma(u(t)) \left(u(t)\big|(1-{\rm id})^{n-2}\right)_{H_Y}\\
&=&-\|u(t)\|^p_{{V^{(n)}_{Y,p}}}+\left(u(t,0) -(n -1)\mu_{n-2} (u(t))\right)
(n-1)(2n-1) \int_0^1\int_0^x u(t,y)dy(1-x)^{n-1}dx.
\end{eqnarray*}
if $n\ge 2$, and a similar but simpler computations holds for $n=1$.
We finally arrive at
$$
\frac12 \frac{{d} \|u(t)\|_{H_Y}^2}{dt}
=-\|u(t)\|^p_{{V^{(n)}_{Y,p}}}.
$$

Because $V^{(n)}_{Y,p}\hookrightarrow H_Y$, we get
$$
\frac12 \frac{d \|u(t)\|_{H_Y}^2}{d t}
\leq-C_0\|u(t)\|^p_{H_Y},
$$
for some $C_0>0$.
Setting 
\[
v(t):=\|u(t)\|_{H_Y}^2,\qquad t\ge 0,
\]
we have shown that
\begin{equation}\label{serge19/12}
v'(t)\leq -C v(t)^\alpha,
\end{equation}
with 
\[\alpha:=\frac{p}{2}
\]
and $C:=2C_0$. Now we distinguish the case $\alpha>1$ and $\alpha\leq 1$. 

1) In the first case we consider the function $w$ defined by
$$
w(t):=C (\alpha-1) t-v(t)^{1-\alpha},\qquad t\ge 0.
$$
(Without loss of generality we can assume that
$v(t)>0$, for all $t>0$, otherwise as $v$ is decaying, $v$ would be zero after a time $t_0>0$).

By direct calculations, we have
$$
w'=(\alpha-1) (C+v^{-\alpha} v'),\qquad \forall t> 0,
$$
and by \eqref{serge19/12}, we get that $w$ is decaying.
Hence 
$$
w(t)\leq w(0),\quad\forall t>0,
$$
or equivalently
$$
v(t)\leq K t^{-\frac{1}{\alpha-1}},\quad\forall t>0,
$$
with $K:=\left(C(\alpha-1)\right)^{-\frac{1}{\alpha-1}}>0$, which shows the polynomial decay.

2) If $\alpha\leq 1$, then again by the decay of $v$, we have
$$
v(t)\leq v(0)^{1-\alpha} v(t)^\alpha,\quad\forall t>0,
$$
and therefore it follows from \eqref{serge19/12} that
\begin{equation}\label{serge19/12b}
v'(t)\leq -K v(t)^\alpha,
\end{equation}
with $K:=\frac{C}{v(0)^{1-\alpha}}$.
As before this property implies that the mapping $t\mapsto v(t) e^{Kt}$ is decaying 
and therefore we obtain
$$
v(t)\leq v(0) e^{-Kt},\quad\forall t>0,
$$
which shows the claimed exponential decay.
\end{proof}

\bibliographystyle{alpha}
\bibliography{../../referenzen/literatur}
\end{document}